\newtheorem{thm}{Theorem}[section]
\newtheorem{cor}[thm]{Corollary}
\newtheorem{lem}[thm]{Lemma}
\newtheorem{prop}[thm]{Proposition}
\theoremstyle{definition}
\theoremstyle{remark}
\newtheorem{rem}[thm]{Remark}
\numberwithin{equation}{section}
\begin{document}

\title{A note on conical K\"ahler-Ricci flow on minimal elliptic K\"ahler surfaces}

\author{Yashan Zhang}
\address{Department of mathematics, University of Macau, Macau, China}
\email{yashanzh@163.com}
\thanks{The author is partially supported by the Science and Technology Development Fund (Macao S.A.R.) Grant FDCT/ 016/2013/A1 and the Project MYRG2015-00235-FST of the University of Macau}

\begin{abstract}
We prove that, under a semi-ampleness type assumption on the twisted canonical line bundle, the conical K\"{a}hler-Ricci flow on a minimal elliptic K\"{a}hler surface converges in the sense of currents to a generalized conical K\"{a}hler-Einstein on its canonical model. Moreover, the convergence takes place smoothly outside the singular fibers and the chosen divisor.
\end{abstract}

\maketitle

\section{Introduction}
The K\"{a}hler-Ricci flow has been studied extensively and become a powerful tool in K\"{a}hler geometry. Cao \cite{C} proved that K\"{a}hler-Ricci flow will converge smoothly to K\"{a}hler-Einstein metrics on a compact K\"{a}hler manifold with numerically trivial or ample canonical line bundle. Tsuji \cite{Ts} and Tian-Zhang \cite{TZo} proved the convergence of K\"{a}hler-Ricci flow to singular K\"{a}hler-Einstein metrics on a smooth minimal model of general type. In \cite{ST06,ST12}, Song-Tian obtained the weak convergence of K\"{a}hler-Ricci flow to generalized K\"{a}hler-Einstein metrics if the canonical line bundle is semi-ample.

The conical K\"{a}hler-Ricci flow was introduced in \cite{CW15}  and is expected to deform a conical K\"{a}hler metric to conical K\"{a}hler-Einstein metric. In fact the convergence of the conical K\"{a}hler-Ricci flow has been studied when the twisted canonical line bundle is positive or trivial \cite{CW}, nef and big \cite{Sh} or negative \cite{LZ}. Also see, e.g., \cite{MRS,PSSW1,Yi} for discussions of the conical Ricci flow on Riemann surfaces.
\par This paper aims to study the convergence of the conical K\"{a}hler-Ricci flow on a minimal elliptic K\"{a}hler surface of Kodaira dimension one, under a semi-ampleness type assumption (\ref{assumption}) on the twisted canonical line bundle.
\par Throughout this paper, let $(X,\omega_{0})$ be a minimal elliptic K\"{a}hler surface of Kodaira dimension $kod(X)=1$. By definition (see, e.g., Section I.3 of \cite{M} or Section 2.2 of \cite{ST06}), there exists a holomorphic map $f:X\rightarrow\Sigma$, determined by the pluricanonical system $|mK_{X}|$ for sufficiently large integer $m$, from $X$ onto a smooth projective curve $\Sigma$ (i.e., the canonical model of $X$), such that the general fiber is a smooth elliptic curve and all fibers are free of $(-1)$-curves. Set $\Sigma_{reg}:=\{s\in \Sigma|X_{s}:=f^{-1}(s)$ is a nonsingular fiber$\}$ and $X_{reg}=f^{-1}(\Sigma_{reg})$. Assume  $\Sigma\setminus\Sigma_{reg}=\{s_{1},\ldots,s_{k}\}$ and let $m_{i}F_{i}=X_{s_{i}}$ be the corresponding singular fiber of multiplicity $m_{i}$, $i=1,\ldots,k$. We refer readers to Section I.5 of \cite{M} for several interesting examples of minimal elliptic surfaces.
\par Consider a fixed point $r\in\Sigma_{reg}$, which can be seen as a divisor on $\Sigma$, and the smooth divisor $D:=X_{r}$ on $X$. Let $S'$ be the defining section of $r$ and $h'$ be a fixed smooth Hermitian metric on the holomorphic line bundle associated to $r$ with $|S'|_{h'}^{2}\leq1$. Then $S:=f^{*}S'$ is a defining section of $D$ and $h:=f^{*}h'$ is a smooth Hermitian metric on the holomorphic line bundle associated to $D$ with $|S|_{h}^{2}=f^{*}|S'|_{h'}^{2}\leq1$. Set $\Gamma'=\{r,s_{1},\ldots,s_{k}\}$ and $\Gamma=f^{-1}(\Gamma')$.
\par Fix an arbitrary $\beta\in(0,1)$. Then for any sufficiently small positive constant $\delta$,
\begin{equation}
\omega_{0}^{*}:=\omega_{0}+\delta\sqrt{-1}\partial\bar{\partial}|S|_{h}^{2\beta}\nonumber
\end{equation}
is a conical K\"{a}hler metric with cone angle $2\pi\beta$ along $D$ (see \cite{Do}), which means that $\omega_{0}^{*}$ is a smooth K\"{a}hler metric on $X\setminus D$ and is asymptotically equivalent along $D$ to the local model metric
\begin{equation}
\sqrt{-1}\left(\frac{dz_{1}\wedge d\bar{z}_{1}}{|z_{1}|^{2(1-\beta)}}+dz_{2}\wedge d\bar{z}_{2}\right)\nonumber,
\end{equation}
on $\mathbb{C}^{2}$, where $(z_{1},z_{2})$ are local holomorphic coordinates such that $D=\{z_{1}=0\}$ locally.
\par Consider the conical K\"{a}hler-Ricci flow
\begin{equation}\label{CKRF}
\left\{
\begin{aligned}
\partial_{t}\omega&=-Ric(\omega)-\omega+2\pi(1-\beta)[D]\\
\omega(0)&=\omega_{0}^{*},
\end{aligned}
\right.
\end{equation}
where $[D]$ is the current of integration associated to the divisor $D$ on $X$. By results in \cite{Sh} we know that a solution $\omega$ to (\ref{CKRF}) exists uniquely for all $t\geq0$ if and only if the twisted canonical line bundle $K_{X}+(1-\beta)L_{D}$ is nef, where $L_{D}$ is the holomorphic line bundle associated to the divisor $D$. In particular, under the assumption (\ref{assumption}) below, (\ref{CKRF}) admits a unique long time solution. The following is our main result.
\begin{thm}\label{main}
Assume as above. Assume in addition that there exists a K\"{a}hler metric $\theta'$ on $\Sigma$ such that
\begin{equation}\label{assumption}
f^{*}\theta'\in -2\pi c_{1}(X)+2\pi(1-\beta)[D].
\end{equation}
Then,
\begin{itemize}
\item[(1)] As $t\to\infty$, $\omega(t)\to f^{*}\omega_{\infty}'$ as currents on $X$ and the convergence takes place smoothly on $X\setminus\Gamma$.
\item[(2)] $\omega_{\infty}'$ is a positive closed $(1,1)$-current on $\Sigma$ such that it is a smooth K\"{a}hler metric on $\Sigma\setminus\Gamma'$ and $Ric(\omega_{\infty}')=-\sqrt{-1}\partial\bar{\partial}\log\omega_{\infty}'$ is a well-defined current on $\Sigma$ satisfying
\begin{equation}
Ric(\omega_{\infty}')=-\omega_{\infty}'+2\pi(1-\beta)[r]+\omega_{WP}+2\pi\sum_{i=1}^{k}\frac{m_{i}-1}{m_{i}}[s_{i}],
\end{equation}
where $\omega_{WP}$ is the induced Weil-Petersson metric.
\end{itemize}
\end{thm}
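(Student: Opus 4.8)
The plan is to recast the conical K\"ahler--Ricci flow \eqref{CKRF} as a scalar parabolic complex Monge--Amp\`ere equation, working in the conical category as in \cite{Sh,CW,LZ}, and then to run the collapsing estimates of Song--Tian for elliptic fibrations \cite{ST06,ST12}, localized away from the bad locus $\Gamma$ and patched, near the cone divisor $D$, with the conical a priori estimates of \cite{Sh,LZ}.

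\textbf{Reduction to a scalar equation.} Since $[\omega(t)]=e^{-t}[\omega_{0}^{*}]+(1-e^{-t})\big(-2\pi c_{1}(X)+2\pi(1-\beta)[D]\big)$, assumption \eqref{assumption} gives $[\omega(t)]=e^{-t}[\omega_{0}^{*}]+(1-e^{-t})[f^{*}\theta']$. Set $\hat\omega_{t}:=e^{-t}\omega_{0}^{*}+(1-e^{-t})f^{*}\theta'$ and write $\omega(t)=\hat\omega_{t}+\sqrt{-1}\partial\bar\partial\varphi(t)$ on $X\setminus D$. Using $\partial_{t}\hat\omega_{t}=-\hat\omega_{t}+f^{*}\theta'$ and $Ric(\omega)=-\sqrt{-1}\partial\bar\partial\log\omega^{2}$ off $D$, one checks that $\varphi$ solves
\begin{equation}
\partial_{t}\varphi=\log\frac{(\hat\omega_{t}+\sqrt{-1}\partial\bar\partial\varphi)^{2}}{\Omega}-\varphi,\qquad \varphi(0)=0,\nonumber
\end{equation}
where $\Omega$ is a fixed smooth volume form on $X$ multiplied by the conical weight $|S|_{h}^{-2(1-\beta)}$, normalized so that $\sqrt{-1}\partial\bar\partial\log\Omega=f^{*}\theta'$ on $X\setminus D$; such $\Omega$ exists by the $\partial\bar\partial$-lemma and \eqref{assumption}, and it is comparable to $\omega(t)^{2}$ near $D$. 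The long-time existence and uniqueness of $\varphi$ in the conical H\"older spaces (equivalently, of the flow \eqref{CKRF}) is exactly the content of \cite{Sh} under \eqref{assumption}, which in particular forces $K_{X}+(1-\beta)L_{D}$ to be nef.

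\textbf{A priori estimates.} This is the heart of the argument. First I would establish the uniform bound $\|\varphi(t)\|_{L^{\infty}(X)}\le C$ by a maximum-principle comparison exploiting that the limiting class descends to the base curve: one bounds $\partial_{t}\varphi$ from above and below using the fiberwise flat elliptic metrics and the base K\"ahler form $\theta'$, together with a uniform control of the density of $\Omega$ coming from Kodaira's canonical bundle formula and the weight $|S|_{h}^{-2(1-\beta)}$; this also yields $\varphi(t)\to\varphi_{\infty}$ in $L^{\infty}$ and $\partial_{t}\varphi(t)\to0$. Next, on any compact $K\Subset X\setminus\Gamma$ a parabolic Schwarz lemma applied to $f:(X,\omega(t))\to(\Sigma,\theta')$ together with an Aubin--Yau/Calabi second-order estimate shows that $\omega(t)$ is uniformly comparable to $f^{*}\theta'$ in the horizontal directions while the fiber directions contract; near $D$ but away from the singular fibers one runs the conical analogues of these estimates as in \cite{Sh,LZ} to keep $\omega(t)$ uniformly of cone angle $2\pi\beta$ along $D$. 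Local higher-order estimates (Calabi's $C^{3}$, Evans--Krylov, Schauder) then upgrade these to uniform $C^{\infty}_{loc}(X\setminus\Gamma)$ bounds, whence $\omega(t)\to f^{*}\omega_{\infty}'$ smoothly on $X\setminus\Gamma$ with $\omega_{\infty}':=\theta'+\sqrt{-1}\partial\bar\partial\varphi_{\infty}$, a smooth K\"ahler metric on $\Sigma\setminus\Gamma'$.

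\textbf{Identifying the limit.} Passing to the limit in the Monge--Amp\`ere equation after integrating over the (flat, collapsing) fibers, $\varphi_{\infty}$ solves on $\Sigma$ a twisted Monge--Amp\`ere equation whose right-hand side is determined by Kodaira's canonical bundle formula $K_{X}=f^{*}\big(K_{\Sigma}+\mathcal{L}\big)+\sum_{i}(m_{i}-1)F_{i}$ with $m_{i}F_{i}=f^{*}s_{i}$: the Hodge-theoretic part of $\mathcal{L}$ contributes the Weil--Petersson form $\omega_{WP}$ (the $L^{2}$-metric on the moduli of the elliptic fibers), the multiple fibers contribute $2\pi\sum_{i}\frac{m_{i}-1}{m_{i}}[s_{i}]$, and the twist by $2\pi(1-\beta)[D]$ with $D=X_{r}=f^{-1}(r)$, $r\in\Sigma_{reg}$, contributes $2\pi(1-\beta)[r]$. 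Taking $-\sqrt{-1}\partial\bar\partial\log$ of both sides gives, as currents on $\Sigma$,
\begin{equation}
Ric(\omega_{\infty}')=-\omega_{\infty}'+\omega_{WP}+2\pi(1-\beta)[r]+2\pi\sum_{i=1}^{k}\frac{m_{i}-1}{m_{i}}[s_{i}],\nonumber
\end{equation}
and the regularity of $\omega_{\infty}'$ on $\Sigma\setminus\Gamma'$ is part of the interior estimates above. The step I expect to be the main obstacle is the simultaneous control of the two distinct degenerations — the Song--Tian collapse of the elliptic fibers as one approaches a singular fiber, and the conical singularity along $D$ — which must be reconciled by localizing carefully away from $\Gamma$ and matching the collapsing estimates to the conical ones; in particular, obtaining a clean uniform $C^{0}$ bound carrying the correct $|S|_{h}$-weight near $D$ that remains uniform as the fibers collapse is the delicate technical point.
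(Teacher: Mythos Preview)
Your overall strategy---reduce to a scalar Monge--Amp\`ere flow, run the Song--Tian collapsing estimates on compact sets away from $\Gamma$, identify the limit via the canonical bundle formula---is the same as the paper's. Two points, however, deserve comment.

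\textbf{A technical gap in the higher-order step.} You write that ``Local higher-order estimates (Calabi's $C^{3}$, Evans--Krylov, Schauder) then upgrade these to uniform $C^{\infty}_{loc}(X\setminus\Gamma)$ bounds.'' This is the delicate point: in the collapsing regime the reference form $\hat\omega_{t}$ degenerates in the fiber direction, so a two-sided bound $C^{-1}\hat\omega_{t}\le\omega(t)\le C\hat\omega_{t}$ on $K\Subset X\setminus\Gamma$ does \emph{not} feed directly into the usual local parabolic theory---the ellipticity constants blow up as $t\to\infty$. The paper handles this (Proposition~3.5) by invoking the argument of Gross--Tosatti--Zhang and Hein--Tosatti, which crucially uses that the smooth fibers are elliptic curves: one passes to the universal cover of a trivialized torus fibration, rescales to a non-collapsed picture, and applies interior estimates there. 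This step also needs the extra twisting term in the approximating flow to be pulled back from the base (so it is constant in the fiber direction), which the paper checks explicitly. Your sketch does not supply a substitute for this, and the generic Evans--Krylov route will not close.

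\textbf{A methodological difference.} You propose to run all estimates directly in the conical category via \cite{Sh,LZ}. The paper instead uses the Campana--Guenancia--P\u{a}un smoothing $\chi(\epsilon^{2}+|S|_{h}^{2})$ to replace the conical flow by a one-parameter family of \emph{smooth} twisted K\"ahler--Ricci flows, imports the uniform $C^{0}$, $\partial_{t}\psi$, and Schwarz bounds from Edwards, runs the Song--Tian fiber-averaging argument (with the barrier function $\sigma^{\lambda}$ vanishing on $\Gamma$) for the $C^{2}$ estimate on the smooth flows, and only at the end lets $\epsilon\to0$. Both routes are viable in principle, but the approximation route cleanly separates the conical issue from the collapsing issue; in particular it lets one invoke the smooth collapsing higher-order estimates verbatim rather than having to redo them in conical H\"older spaces. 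Your closing paragraph correctly anticipates that reconciling the two degenerations is the main obstacle, and the approximation method is precisely the paper's device for doing so.
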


\begin{rem}
Theorem \ref{main} is a generalization of the results of Song-Tian \cite{ST06} and Fong-Zhang \cite{FZ} on the K\"{a}hler-Ricci flow to the conical setting. Following \cite{ST06}, we may call $\omega_{\infty}'$ a \emph{generalized conical K\"{a}hler-Einstein metric} on $\Sigma$.
\end{rem}
\begin{rem}
We should mention that in the setting of Theorem \ref{main}, it is proved in \cite{Ed} that the scalar curvature of $\omega(t)$ is uniformly bounded on $(X\setminus D)\times[0,\infty)$.
\end{rem}
\par The paper is organized as follows: In Section 2 we construct the generalized conical K\"{a}hler-Einstein metric $\omega_\infty'$ on $\Sigma$. In Section 3 we derive some necessary estimates and then give a proof of Theorem \ref{main}.

\section{Generalized conical K\"{a}hler-Einstein metrics}
In this section we construct the generalized conical K\"{a}hler-Einstein metric on $\Sigma$. To begin with, let $\omega_{SF}:=\omega_{0}+\sqrt{-1}\partial\bar{\partial}\rho_{SF}$, where $\rho_{SF}$ is a smooth function on $X_{reg}$, be the semi-flat $(1,1)$-form defined by Lemma 3.2 of \cite{ST06}. Here the semi-flatness means that, for any $s\in\Sigma_{reg}$, the restriction $\omega_{SF}|_{X_{s}}$ is a Ricci-flat K\"{a}hler metric on the smooth fiber $X_{s}$. On the other hand, by the assumption (\ref{assumption}) we can fix a smooth volume form $\Omega$ on $X$ satisfying
\begin{equation}
\int_{X}\Omega=\int_{X}\omega_{0}^{2}\nonumber.
\end{equation}
and
\begin{equation}
\sqrt{-1}\partial\bar{\partial}\log\Omega=f^{*}\theta'-(1-\beta)R_{h}\nonumber
\end{equation}
where $R_{h}=-\sqrt{-1}\partial\bar{\partial}\log h$ is the curvature form of $h$. Note that if $R_{h'}$ is the curvature form of $h'$, then $R_{h}=f^{*}R_{h'}$.
Define
\begin{equation}
F=\frac{\Omega}{2\omega_{SF}\wedge f^{*}\theta'}.
\end{equation}
Then $F$ can be seen as a function on $\Sigma$ and $F\in L^{p}(\Sigma,\theta)$ for some $p>1$ (see \cite{ST06,ST12,He}). Without loss of any generality we assume $p<\frac{1}{1-\beta}$.
\par The following proposition is essentially contained in \cite{ST06}, which is also a special case of the general theory of complex Monge-Amp\`{e}re equations (see \cite{Ko1,Ko2}).
\begin{prop}\label{prop1}
There exists a unique solution $\varphi_{\infty}$ solving the following equation on $\Sigma$
\begin{equation}\label{MA}
\theta'+\sqrt{-1}\partial\bar{\partial}(\varphi_{\infty}+\delta|S'|_{h'}^{2\beta})=\frac{Fe^{\varphi_{\infty}+\delta|S'|_{h'}^{2\beta}}}{|S'|_{h'}^{2(1-\beta)}}\theta'
\end{equation}
with $\varphi_{\infty}+\delta|S'|_{h'}^{2\beta}\in PSH(\Sigma,\theta)\bigcap C^{\alpha}(\Sigma,\theta)\bigcap C^{\infty}(\Sigma\setminus\Gamma')$ for some $\alpha\in(0,1)$.
\par Moreover, if we define $\omega_{\infty}'=\theta'+\sqrt{-1}\partial\bar{\partial}(\varphi_{\infty}+\delta|S'|_{h'}^{2\beta})$, then $\omega_{\infty}'$ satisfies all the properties stated in part (2) of Theorem \ref{main}.
\end{prop}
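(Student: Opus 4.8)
The plan is to regard (\ref{MA}) as a scalar complex Monge-Amp\`ere equation (equivalently, a semilinear elliptic PDE) on the curve $\Sigma$ whose right-hand side has a mild singularity along the finite set $\Gamma'$, to solve it by the standard $L^{p}$-theory of such equations together with the continuity method (essentially as in \cite{ST06,Ko1,Ko2,Sh}), and then to verify by a direct computation of currents that the resulting metric $\omega_{\infty}'$ has all the properties listed in part (2) of Theorem \ref{main}.

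First I would set $\psi:=\varphi_{\infty}+\delta|S'|_{h'}^{2\beta}$, so that (\ref{MA}) becomes the scalar complex Monge-Amp\`ere equation
\begin{equation}
\theta'+\sqrt{-1}\partial\bar{\partial}\psi=e^{\psi}\,F\,|S'|_{h'}^{-2(1-\beta)}\,\theta',\qquad\psi\in PSH(\Sigma,\theta'),\nonumber
\end{equation}
whose conical reference metric is $\omega_{\beta}:=\theta'+\delta\sqrt{-1}\partial\bar{\partial}|S'|_{h'}^{2\beta}$. Its right-hand side is of the type treated by the $L^{p}$-theory of conical complex Monge-Amp\`ere equations, and the hypothesis $p<\tfrac{1}{1-\beta}$ is precisely the cone-angle integrability condition $|S'|_{h'}^{-2(1-\beta)}\in L^{p}(\Sigma,\theta')$ required by that theory (cf.\ \cite{Ko1,Ko2,Sh}). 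Uniqueness follows from the maximum principle applied to the difference of two solutions, the nonlinearity $e^{\psi}$ having the favourable sign: at an interior maximum of $\psi_{1}-\psi_{2}$ one has $\sqrt{-1}\partial\bar{\partial}(\psi_{1}-\psi_{2})\le 0$, hence $e^{\psi_{1}}\le e^{\psi_{2}}$ and $\psi_{1}\le\psi_{2}$ there, and symmetrically, so $\psi_{1}\equiv\psi_{2}$. For existence I would run the continuity method on
\begin{equation}
\theta'+\sqrt{-1}\partial\bar{\partial}\psi_{\tau}=e^{\psi_{\tau}}\big(\tau\,F\,|S'|_{h'}^{-2(1-\beta)}+(1-\tau)\big)\theta',\qquad\tau\in[0,1],\nonumber
\end{equation}
solved at $\tau=0$ by $\psi_{0}\equiv 0$: openness uses the invertibility of the linearised operator $\Delta_{\omega_{\psi_{\tau}}}-1$, and closedness the uniform $C^{0}$- (hence $C^{\alpha}$-) estimate for $\psi_{\tau}$ provided by the $L^{p}$-theory in the presence of the favourable zeroth-order term $e^{\psi}$ (a variational argument or a monotone sub/supersolution iteration would do equally well). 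This produces $\psi\in PSH(\Sigma,\theta')\cap C^{\alpha}(\Sigma)$ for some $\alpha\in(0,1)$; on $\Sigma\setminus\Gamma'$ the density $e^{\psi}F|S'|_{h'}^{-2(1-\beta)}$ is smooth and strictly positive, so elliptic bootstrapping upgrades $\psi$ to $C^{\infty}$ there, and $\omega_{\infty}':=\theta'+\sqrt{-1}\partial\bar{\partial}\psi$ is a positive closed $(1,1)$-current on $\Sigma$ which is a smooth K\"ahler metric on $\Sigma\setminus\Gamma'$.

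It remains to verify the Ricci identity, which is a computation of currents. On $\Sigma\setminus\Gamma'$, taking $-\sqrt{-1}\partial\bar{\partial}\log$ of both sides of (\ref{MA}) yields
\begin{equation}
Ric(\omega_{\infty}')=-\sqrt{-1}\partial\bar{\partial}\log F-\sqrt{-1}\partial\bar{\partial}\psi+(1-\beta)\sqrt{-1}\partial\bar{\partial}\log|S'|_{h'}^{2}+Ric(\theta').\nonumber
\end{equation}
Here $-\sqrt{-1}\partial\bar{\partial}\psi=\theta'-\omega_{\infty}'$; by the Poincar\'e-Lelong formula $\sqrt{-1}\partial\bar{\partial}\log|S'|_{h'}^{2}=2\pi[r]-R_{h'}$ as currents on $\Sigma$; and the choice of $\Omega$ gives $-\sqrt{-1}\partial\bar{\partial}\log\Omega=-f^{*}\theta'+(1-\beta)R_{h}$. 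The only remaining piece is $\sqrt{-1}\partial\bar{\partial}\log(2\,\omega_{SF}\wedge f^{*}\theta')$, pushed down to $\Sigma$ through $F=\Omega/(2\,\omega_{SF}\wedge f^{*}\theta')$: this is precisely the Weil-Petersson computation of \cite{ST06}. Using that $\omega_{SF}$ restricts to the Ricci-flat K\"ahler metric on each smooth fiber, it equals $f^{*}\omega_{WP}-f^{*}Ric(\theta')$ on $X_{reg}$, and extending across the multiple fibers $m_{i}F_{i}$ produces the additional current $2\pi\sum_{i=1}^{k}\frac{m_{i}-1}{m_{i}}[s_{i}]$ on $\Sigma$. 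Substituting these, the ``background'' terms $\theta'$, $R_{h'}$ and $Ric(\theta')$ cancel, leaving $Ric(\omega_{\infty}')=-\omega_{\infty}'+2\pi(1-\beta)[r]+\omega_{WP}+2\pi\sum_{i=1}^{k}\frac{m_{i}-1}{m_{i}}[s_{i}]$ on $\Sigma\setminus\Gamma'$. Since $\psi$ is globally $C^{\alpha}$ and $\omega_{\infty}'$ has no singular part, the only atomic contribution along $r$ is the explicit $2\pi(1-\beta)[r]$, and the identity extends from $\Sigma\setminus\Gamma'$ to an identity of currents on all of $\Sigma$.

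I expect the main obstacle to lie not in any single step but in the bookkeeping at two places. First, one must pin down exactly the integrability that feeds the a priori estimates, i.e.\ relate the $L^{p}$-hypothesis on $F$, the weight $|S'|_{h'}^{-2(1-\beta)}$ and the conical reference metric $\omega_{\beta}$ so that the $C^{0}$-, hence $C^{\alpha}$-, bound for the solution genuinely follows; this is where the assumption $p<\tfrac{1}{1-\beta}$ is used. Second, one must make sure that the Weil-Petersson and multiple-fiber identity of \cite{ST06}, established there in the smooth K\"ahler-Ricci flow setting, carries over verbatim as an identity of currents on $\Sigma$ once the conical datum $\delta|S'|_{h'}^{2\beta}$ and the divisor $D=X_{r}$ have been introduced; it does, precisely because $r\in\Sigma_{reg}$ is disjoint from $\{s_{1},\dots,s_{k}\}$, so the new conical term decouples from the analysis near the singular fibers and contributes only the explicit term $2\pi(1-\beta)[r]$.
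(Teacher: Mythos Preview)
Your proposal is correct and is exactly what the paper has in mind: the paper gives no proof of this proposition at all, merely stating that it is ``essentially contained in \cite{ST06}'' and a special case of the $L^{p}$ Monge--Amp\`ere theory of \cite{Ko1,Ko2}. Your sketch faithfully unpacks those citations---Ko{\l}odziej's $L^{p}$ estimate (using that the singularities of $F$ at the $s_{i}$ and of $|S'|_{h'}^{-2(1-\beta)}$ at $r$ are disjoint, so the product lies in $L^{p}$ for the common exponent $p>1$), the continuity method with the favourable $e^{\psi}$ sign, elliptic bootstrapping on $\Sigma\setminus\Gamma'$, and the Song--Tian Weil--Petersson/multiple-fiber computation augmented by the Poincar\'e--Lelong term $2\pi(1-\beta)[r]$---so there is nothing to add.
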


\begin{rem}
$\omega_{\infty}'$ is a conical K\"{a}hler metric on $\Sigma_{reg}$ with cone angle $2\pi\beta$ at $r$.
\end{rem}

\section{Estimates and convergence}
To prove part (1) of Theorem \ref{main} we first reduce the conical K\"{a}hler-Ricci flow (\ref{CKRF}) to a parabolic complex Monge-Amp\`{e}re equation, as in, e.g., \cite{LZ,Sh,Ed}.
\par Set $\omega_{t}=e^{-t}\omega_{0}+(1-e^{-t})f^{*}\theta'$. Then $\omega=\omega_{t}+\delta\sqrt{-1}\partial\bar{\partial}|S|_{h}^{2\beta}+\sqrt{-1}\partial\bar{\partial}\varphi$ solves the conical K\"{a}hler-Ricci flow (\ref{CKRF}) if $\varphi=\varphi(t)$ solves the following parabolic complex Monge-Amp\`{e}re equation
\begin{equation}\label{CKRF'}
\left\{
\begin{aligned}
\partial_{t}\varphi&=\log\frac{e^{t}|S|_{h}^{2(1-\beta)}(\omega_{t}+\delta\sqrt{-1}\partial\bar{\partial}|S|_{h}^{2\beta}+\sqrt{-1}\partial\bar{\partial}\varphi)^{2}}{\Omega}-\varphi-\delta|S|_{h}^{2\beta}\\
\varphi(0)&=0.
\end{aligned}
\right.
\end{equation}
Note that in the above reduction we have used the Poincar\'{e}-Lelong formula
\begin{equation}
\sqrt{-1}\partial\bar{\partial}\log|S|_{h}^{2}=-R_{h}+2\pi[D]\nonumber.
\end{equation}
To obtain the estimates we need, we will make use of the approximation method developed in \cite{CGP}, which is also used in, e.g., \cite{LZ,Sh,Ed}. Following \cite{CGP} we define the smoothing metric by
\begin{equation}
\omega_{t,\epsilon}=\omega_{t}+\delta\sqrt{-1}\partial\bar{\partial}\chi(\epsilon^{2}+|S|_{h}^{2})\nonumber,
\end{equation}
where
\begin{equation}
\chi(\epsilon^{2}+x)=\beta\int_{0}^{x}\frac{(\epsilon^{2}+r)^{\beta}-\epsilon^{2\beta}}{r}dr\nonumber.
\end{equation}
Note that for each $\epsilon>0$, $\omega_{t,\epsilon}$ is a smooth K\"{a}hler metric and, as $\epsilon\to 0$, $\omega_{t,\epsilon}$ converges to $\omega_{t}+\delta\sqrt{-1}\partial\bar{\partial}|S|_{h}^{2\beta}$ globally on $X$ in the sense of currents and in $C_{loc}^{\infty}(X\setminus D)$-topology. Moreover there exists a uniform constant $C>1$ such that
\begin{equation}\label{fact}
0\leq\chi(\epsilon^{2}+|S|_{h}^{2})\leq C
\end{equation}
and
\begin{equation}
\omega_{0,\epsilon}\geq C^{-1}\omega_{0}.
\end{equation}
\par Consider the following smooth approximation equation of (\ref{CKRF'})
\begin{equation}\label{AMA}
\left\{
\begin{aligned}
\partial_{t}\varphi_{\epsilon}&=\log\frac{e^{t}(|S|_{h}^{2}+\epsilon^{2})^{1-\beta}(\omega_{t,\epsilon}+\sqrt{-1}\partial\bar{\partial}\varphi_{\epsilon})^{2}}{\Omega}-\varphi_{\epsilon}-\delta\chi(|S|_{h}^{2}+\epsilon^{2})\\
\varphi_{\epsilon}(0)&=0,
\end{aligned}
\right.
\end{equation}
which is equivalent to the following generalized K\"{a}hler-Ricci flow
\begin{equation}\label{AKRF}
\left\{
\begin{aligned}
\partial_{t}\omega_{\varphi_{\epsilon}}&=-Ric(\omega_{\varphi_{\epsilon}})-\omega_{\varphi_{\epsilon}}+(1-\beta)\sqrt{-1}\partial\bar{\partial}(|S|_{h}^{2}+\epsilon^{2})+(1-\beta)R_{h}\\ \omega_{\varphi_{\epsilon}}(0)&=\omega_{0,\epsilon},
\end{aligned}
\right.
\end{equation}
where $\omega_{\varphi_{\epsilon}}=\omega_{t,\epsilon}+\sqrt{-1}\partial\bar{\partial}\varphi_{\epsilon}$.
\par Since the twisted canonical line bundle $K_{X}+(1-\beta)L_{D}$ is nef (see assumption (\ref{assumption})), the generalized K\"{a}hler-Ricci flow (\ref{AMA}), or (\ref{AKRF}), has a unique smooth long time solution \cite{C,TZo,Ts}.
\par Set $\psi_{\epsilon}=\varphi_{\epsilon}+\delta\chi(\epsilon^{2}+|S|_{h}^{2})$ and $\omega_{\psi_{\epsilon}}=\omega_{t}+\sqrt{-1}\partial\bar{\partial}\psi_{\epsilon}=\omega_{\varphi_{\epsilon}}$. The following lemma is proved in \cite{Ed}.
\begin{lem}\cite{Ed}\label{Ed'}
There exists a uniform positive constant $C$ such that on $X\times[0,\infty)$,
\begin{equation}
|\psi_{\epsilon}|+|\partial_{t}\psi_{\epsilon}|+tr_{\omega_{\psi_{\epsilon}}}f^{*}\theta'\leq C\nonumber.
\end{equation}
\end{lem}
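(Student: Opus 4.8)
\emph{Sketch of the argument.} All the estimates are established for the smooth flow (\ref{AMA})--(\ref{AKRF}), with constants independent of $\epsilon\in(0,1]$, and one lets $\epsilon\to 0$ at the end, using the $C^{\infty}_{loc}(X\setminus D)$ convergence of the data together with the uniform estimates of \cite{CGP} for $\omega_{0,\epsilon}$ (in particular $\omega_{0,\epsilon}\geq C^{-1}\omega_{0}$ and $C^{-1}(\epsilon^{2}+|S|_{h}^{2})^{\beta-1}\Omega\leq\omega_{0,\epsilon}^{2}\leq C(\epsilon^{2}+|S|_{h}^{2})^{\beta-1}\Omega$). Write $\Delta_{\omega_{\psi_{\epsilon}}}=tr_{\omega_{\psi_{\epsilon}}}\sqrt{-1}\partial\bar{\partial}$ and set $w_{\epsilon}:=\partial_{t}\psi_{\epsilon}+\psi_{\epsilon}$, which by (\ref{AMA}) equals $\log\big(e^{t}(\epsilon^{2}+|S|_{h}^{2})^{1-\beta}\omega_{\psi_{\epsilon}}^{2}/\Omega\big)$. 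Differentiating in $t$ and using $\partial_{t}\omega_{t}=f^{*}\theta'-\omega_{t}$ one finds
\begin{equation}\label{boxw}
(\partial_{t}-\Delta_{\omega_{\psi_{\epsilon}}})w_{\epsilon}=tr_{\omega_{\psi_{\epsilon}}}(f^{*}\theta')-1,\qquad(\partial_{t}-\Delta_{\omega_{\psi_{\epsilon}}})\psi_{\epsilon}=w_{\epsilon}-\psi_{\epsilon}-2+tr_{\omega_{\psi_{\epsilon}}}\omega_{t}.
\end{equation}
The plan is to prove, in order, (i) the upper bound $\psi_{\epsilon}\leq C$; (ii) the lower bound $\psi_{\epsilon}\geq -C$ and, together with it, $|\partial_{t}\psi_{\epsilon}|\leq C$; (iii) the trace bound $tr_{\omega_{\psi_{\epsilon}}}(f^{*}\theta')\leq C$.

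For (i): at a point where $\psi_{\epsilon}(\cdot,t)$ attains its spatial maximum one has $\omega_{\psi_{\epsilon}}\leq\omega_{t}$, and since $f^{*}\theta'$ is pulled back from the curve $\Sigma$ one has $(f^{*}\theta')^{2}=0$, so $\omega_{t}^{2}=e^{-2t}\omega_{0}^{2}+2e^{-t}(1-e^{-t})\omega_{0}\wedge f^{*}\theta'\leq Ce^{-t}\Omega$; since also $0\leq(\epsilon^{2}+|S|_{h}^{2})^{1-\beta}\leq C$, this gives $\partial_{t}\psi_{\epsilon}\leq C-\psi_{\epsilon}$ there, and ODE comparison (with $\psi_{\epsilon}(0)=\delta\chi(\epsilon^{2}+|S|_{h}^{2})\leq C$ by (\ref{fact})) yields $\psi_{\epsilon}\leq C$ on $X\times[0,\infty)$.

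Step (ii) is the crux, since the lower bound cannot come from a naive maximum principle ($e^{t}\omega_{t}^{2}$ degenerates wherever $df=0$): one compares the flow with the solution of the elliptic Monge--Amp\`ere equation (\ref{MA}). With $\psi_{\infty}':=\varphi_{\infty}+\delta|S'|_{h'}^{2\beta}$, and recalling $\Omega=2F\,\omega_{SF}\wedge f^{*}\theta'$, wedging the pull-back of (\ref{MA}) with $\omega_{SF}$ shows that the reference form $\hat{\omega}_{t}:=\omega_{t}+\sqrt{-1}\partial\bar{\partial}\big((1-e^{-t})f^{*}\psi_{\infty}'+e^{-t}\rho_{SF}\big)=e^{-t}\omega_{SF}+(1-e^{-t})f^{*}\omega_{\infty}'$, a genuine K\"ahler metric on $X_{reg}$ away from the critical set of $f$, satisfies $C^{-1}\leq e^{t}(\epsilon^{2}+|S|_{h}^{2})^{1-\beta}\hat{\omega}_{t}^{2}/\Omega\leq C$ on compact subsets of $X\setminus\Gamma$ for each fixed $t$, the collapsing being carried entirely by the factor $(\epsilon^{2}+|S|_{h}^{2})^{1-\beta}/|S|_{h}^{2(1-\beta)}$. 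Forming a comparison potential $u_{\epsilon}$ out of $\psi_{\epsilon}$, $(1-e^{-t})f^{*}\psi_{\infty}'$, $e^{-t}\rho_{SF}$ and $\delta\chi(\epsilon^{2}+|S|_{h}^{2})$, and using (\ref{boxw}) and (\ref{MA}), one finds that $u_{\epsilon}$ obeys a parabolic inequality whose zeroth-order term is controlled by the $L^{p}$ (some $p>1$) integrability of $F$ and by a positive lower bound for $\omega_{SF}\wedge f^{*}\theta'$ on compact subsets of $X_{reg}$. Running the parabolic maximum principle on an exhaustion of $X\setminus\Gamma$ and re-absorbing the boundary contributions --- which is where the nefness of $K_{X}+(1-\beta)L_{D}$ and the integrability of $F$ really enter, in the spirit of \cite{ST06,ST12,CGP} --- gives $\psi_{\epsilon}\geq -C$ and, comparing volume forms in the same way, the lower bound $e^{t}(\epsilon^{2}+|S|_{h}^{2})^{1-\beta}\omega_{\psi_{\epsilon}}^{2}\geq c\,\Omega$, i.e. $w_{\epsilon}\geq -C$. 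With (i) this gives $|\psi_{\epsilon}|\leq C$, after which the matching upper bound $w_{\epsilon}\leq C$ follows directly: applying the maximum principle to $w_{\epsilon}-A\psi_{\epsilon}$ with $A$ large, using $tr_{\omega_{\psi_{\epsilon}}}\omega_{t}\geq(1-e^{-t})tr_{\omega_{\psi_{\epsilon}}}(f^{*}\theta')$ to absorb the trace term for $t$ bounded away from $0$ (the short initial interval $[0,t_{0}]$ being covered by the uniform-in-$\epsilon$ short-time estimates for the conical flow, cf. \cite{CGP,Sh}), one gets $(\partial_{t}-\Delta_{\omega_{\psi_{\epsilon}}})(w_{\epsilon}-A\psi_{\epsilon})\leq -A(w_{\epsilon}-A\psi_{\epsilon})+C$, hence $w_{\epsilon}\leq C$, and so $|\partial_{t}\psi_{\epsilon}|=|w_{\epsilon}-\psi_{\epsilon}|\leq C$. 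I expect this control of the flow from below near the singular fibers $F_{i}$ and the divisor $D$, where both $\omega_{SF}$ and $\rho_{SF}$ degenerate, to be the main obstacle.

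For (iii), with $|\psi_{\epsilon}|+|\partial_{t}\psi_{\epsilon}|\leq C$ in hand one applies the parabolic Schwarz (Chern--Lu) inequality to $f\colon(X,\omega_{\psi_{\epsilon}})\to(\Sigma,\theta')$: along the generalized K\"ahler--Ricci flow (\ref{AKRF}) the Ricci contributions of the flow cancel those produced by the Chern--Lu computation, and since $\theta'$ has bounded holomorphic bisectional curvature and the extra twist $(1-\beta)(\sqrt{-1}\partial\bar{\partial}(|S|_{h}^{2}+\epsilon^{2})+R_{h})$ is uniformly bounded in $\epsilon$, one obtains $(\partial_{t}-\Delta_{\omega_{\psi_{\epsilon}}})\log tr_{\omega_{\psi_{\epsilon}}}(f^{*}\theta')\leq C\,tr_{\omega_{\psi_{\epsilon}}}(f^{*}\theta')+C$. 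Setting $Q:=\log tr_{\omega_{\psi_{\epsilon}}}(f^{*}\theta')-A\psi_{\epsilon}$, using (\ref{boxw}) and again $tr_{\omega_{\psi_{\epsilon}}}\omega_{t}\geq(1-e^{-t})tr_{\omega_{\psi_{\epsilon}}}(f^{*}\theta')$, one gets for $A$ large and $t$ away from $0$ that $(\partial_{t}-\Delta_{\omega_{\psi_{\epsilon}}})Q\leq-c\,e^{Q}+C$, so the maximum principle bounds $Q$ from above and hence $tr_{\omega_{\psi_{\epsilon}}}(f^{*}\theta')=e^{Q+A\psi_{\epsilon}}\leq C$; the short initial time interval is again handled with the uniform estimates of \cite{CGP,Sh}. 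Letting $\epsilon\to 0$ then gives the stated bounds.
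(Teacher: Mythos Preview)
The paper does not supply its own proof of this lemma; it is quoted verbatim from Edwards \cite{Ed}, so there is nothing in the paper to compare against beyond the citation. Your task here is really to reproduce (or outline) Edwards' argument, and on that front your sketch has the right overall architecture (upper bound on $\psi_\epsilon$, then zeroth-order lower bounds and the $\dot\psi_\epsilon$ bounds, then parabolic Schwarz), but step~(ii) contains a genuine gap.

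The lemma asserts \emph{global} bounds on all of $X\times[0,\infty)$, including over the singular fibers and along $D$. Your barrier in (ii) is built from $\rho_{SF}$ and $f^{*}\psi_{\infty}'$, both of which are only defined (resp.\ smooth) on $X_{reg}$ (resp.\ on $\Sigma\setminus\Gamma'$); the resulting $\hat\omega_t$ is therefore a K\"ahler metric only on compact subsets of $X\setminus\Gamma$. Running the maximum principle on an exhaustion of $X\setminus\Gamma$ and ``re-absorbing the boundary contributions'' is not a proof: the maximum of $\psi_\epsilon$ or $u_\epsilon$ could in principle be attained arbitrarily close to $\Gamma$, and nothing you have written prevents the bound from degenerating there. (Indeed, this barrier technique is exactly what the paper uses later in Proposition~\ref{C0}, but only to prove \emph{local} uniform convergence on compact $K\subset\subset X\setminus\Gamma$, not a global $C^0$ estimate.) You yourself flag this as ``the main obstacle,'' and it is unresolved. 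In \cite{Ed}, following the strategy of \cite{ST06,ST12}, the global lower bound for $\psi_\epsilon$ and for the volume form is obtained by a different route that does not rely on the semi-flat potential or on the limiting $\varphi_\infty$ as a barrier; once these $C^0$ bounds are in place the Schwarz estimate follows.

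Two smaller points. In (iii) you assert that the twist term $\Theta=(1-\beta)\big(\sqrt{-1}\partial\bar\partial\log(|S|_h^2+\epsilon^2)+R_h\big)$ is ``uniformly bounded in $\epsilon$''; it is not (it blows up near $D$ as $\epsilon\to 0$), but since $\Theta$ is pulled back from the curve $\Sigma$ one only has, and only needs, the one-sided bound $\Theta\ge -Cf^{*}\theta'$ (this is the inequality the paper quotes from \cite{Ed} just below (\ref{EV3})). Also, the expression in (\ref{AKRF}) should read $\sqrt{-1}\partial\bar\partial\log(|S|_h^2+\epsilon^2)$, not $\sqrt{-1}\partial\bar\partial(|S|_h^2+\epsilon^2)$; you copied the paper's typo, but it matters for your boundedness claim.
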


Following \cite{To10} we fix a smooth nonnegative function $\sigma$ on $X$, which vanishes exactly on $\Gamma$, such that $\sigma\leq1$ and
\begin{equation}\label{fact1}
\sqrt{-1}\partial\sigma\wedge\bar{\partial}\sigma\leq C f^{*}\theta', {}-C f^{*}\theta'\leq\sqrt{-1}\partial\bar{\partial}\sigma\leq C f^{*}\theta'
\end{equation}
on $X$ for some constant $C$.
Define a function on $\Sigma$ by
\begin{equation}
\overline{\psi_{\epsilon}}(s)=\frac{\int_{X_{s}}\psi_{\epsilon}\omega_{0}\mid_{X_{s}}}{\int_{X_{s}}\omega_{0}\mid_{X_{s}}}\nonumber.
\end{equation}
In the following, we will also use $\overline{\psi_{\epsilon}}$ to denote the pull-back $f^*\overline{\psi_{\epsilon}}$ to $X$.

\begin{lem}\label{L1}
There exist positive constants $C$ and $\lambda$ such that
\begin{equation}
|\psi_{\epsilon}-\overline{\psi_{\epsilon}}|\leq\frac{Ce^{-t}}{\sigma^{\lambda}}.
\end{equation}
\end{lem}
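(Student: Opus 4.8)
The goal is to show that $\psi_\epsilon$ is close, at rate $e^{-t}$, to its fiberwise average $\overline{\psi_\epsilon}$, with a mild blow-up $\sigma^{-\lambda}$ near $\Gamma$. The natural strategy is a maximum principle argument applied to the quantity $u:=\pm(\psi_\epsilon-\overline{\psi_\epsilon})$ against a barrier of the form $Ae^{-t}\sigma^{-\lambda}+B\sigma^{\mu}$ (or, equivalently, to work with $\sigma^\lambda u$), using the parabolic Monge–Amp\`ere equation \eqref{AMA} that $\psi_\epsilon$ satisfies together with the uniform bounds of Lemma~\ref{Ed'}. First I would record the evolution equation satisfied by $\psi_\epsilon-\overline{\psi_\epsilon}$: since $\overline{\psi_\epsilon}$ is a pull-back from $\Sigma$ and $\partial_t\overline{\psi_\epsilon}$ is the fiberwise average of $\partial_t\psi_\epsilon$, applying the heat operator $\partial_t-\Delta_{\omega_{\psi_\epsilon}}$ to $\psi_\epsilon-\overline{\psi_\epsilon}$ produces terms controlled by $tr_{\omega_{\psi_\epsilon}}f^*\theta'$ (bounded by Lemma~\ref{Ed'}), by $\partial_t\psi_\epsilon$ and $\psi_\epsilon$ (bounded), and — crucially — by the fiberwise average machinery, which contributes an error that decays like $e^{-t}$ because $\omega_t=e^{-t}\omega_0+(1-e^{-t})f^*\theta'$ degenerates in the base direction as $t\to\infty$, so that $\psi_\epsilon$ becomes essentially constant along fibers up to $O(e^{-t})$.

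**Key steps.**
(i) Fix the barrier function. I would set, following \cite{To10} and the uniform inequalities \eqref{fact1}, a test function of the shape
\[
H=\pm(\psi_\epsilon-\overline{\psi_\epsilon})-A e^{-t}\sigma^{-\lambda}+\text{(lower order)},
\]
or, cleaner, estimate $\sigma^\lambda\cdot|\psi_\epsilon-\overline{\psi_\epsilon}|\cdot e^{t}$ directly, and compute its evolution. The $\sigma^{-\lambda}$ weight is needed precisely because the fiberwise Poincar\'e/Moser–Trudinger-type inequality controlling $\psi_\epsilon-\overline{\psi_\epsilon}$ in terms of its fiber-gradient degenerates over the singular fibers $F_i$ and over $D$, with a loss that is a negative power of $\sigma$; this is the standard phenomenon in the Song–Tian and Fong–Zhang estimates.
(ii) Use \eqref{fact1} to bound $\Delta_{\omega_{\psi_\epsilon}}\sigma$, $|\nabla\sigma|^2_{\omega_{\psi_\epsilon}}$, etc., by $C\,tr_{\omega_{\psi_\epsilon}}f^*\theta'\le C$ from Lemma~\ref{Ed'}; this is what makes the maximum principle go through on the (non-compact in the degenerate-metric sense, but compact as a manifold once $\epsilon>0$) space $X\times[0,T]$, uniformly in $\epsilon$ and $T$.
(iii) At an interior maximum of $H$, the degenerate Laplacian term has a sign, $\partial_t H\ge 0$, and the remaining terms give an algebraic inequality forcing $\sigma^\lambda e^t|\psi_\epsilon-\overline{\psi_\epsilon}|\le C$; since the bound is independent of $\epsilon$ one then lets $\epsilon\to0$. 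One also has to check the $t=0$ slice (where $\psi_\epsilon=\delta\chi(\epsilon^2+|S|_h^2)$ is bounded, so the inequality holds with room to spare once $A$ is large), and that no maximum escapes to $\Gamma$ because the barrier $\to+\infty$ there while $\psi_\epsilon-\overline{\psi_\epsilon}$ stays bounded by Lemma~\ref{Ed'}.

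**Main obstacle.**
The delicate point is isolating the $e^{-t}$ decay. Bounding $\psi_\epsilon-\overline{\psi_\epsilon}$ by $C\sigma^{-\lambda}$ alone is comparatively soft; extracting the exponential rate requires exploiting that the difference of the Monge–Amp\`ere densities for $\psi_\epsilon$ and for the base potential sees the base direction only through $\omega_t-f^*\theta'=e^{-t}(\omega_0-f^*\theta')$. Concretely, I expect to differentiate \eqref{AMA} along the fiber direction and combine with the semi-flat structure of $\omega_{SF}$ (whose restriction to each $X_s$ is Ricci-flat, hence the fiberwise average kills exactly the leading term), so that the residual is manifestly $O(e^{-t})$ after using the boundedness of $tr_{\omega_{\psi_\epsilon}}f^*\theta'$ and of the time-derivative from Lemma~\ref{Ed'}. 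Getting the weight exponent $\lambda$ and the constants to close consistently — i.e., choosing $\lambda$ large enough for the barrier inequalities in step (ii) but keeping all constants $\epsilon$- and $t$-independent — is the part requiring the most care, but it is by now a routine-if-technical adaptation of the arguments in \cite{To10,FZ,ST06} to the conical weight $(|S|_h^2+\epsilon^2)^{1-\beta}$.
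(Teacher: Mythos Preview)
Your proposal takes a genuinely different route from the paper, and it has a real gap.

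\textbf{What the paper actually does.} The paper's proof is not a parabolic maximum principle at all: it simply invokes the arguments of Corollaries~5.1 and~5.2 in \cite{ST06}, which are \emph{static, fiberwise} estimates. The mechanism is this. For each fixed $t$ and each smooth fiber $X_s$, restricting $\omega_{\psi_\epsilon}=\omega_t+\sqrt{-1}\partial\bar\partial\psi_\epsilon>0$ to $X_s$ (where $f^*\theta'|_{X_s}=0$) gives
\[
e^{-t}\omega_0|_{X_s}+\sqrt{-1}\partial\bar\partial\bigl(\psi_\epsilon|_{X_s}\bigr)>0,
\]
i.e.\ $e^t\psi_\epsilon|_{X_s}$ is $\omega_0|_{X_s}$-plurisubharmonic on the compact Riemann surface $X_s$. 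The standard Green's function / oscillation estimate on a compact Riemann surface then yields $\mathrm{osc}_{X_s}(e^t\psi_\epsilon)\le C_s$, hence $|\psi_\epsilon-\overline{\psi_\epsilon}|\le C_s e^{-t}$, where $C_s$ depends only on the geometry of $(X_s,\omega_0|_{X_s})$ and blows up at worst like $\sigma^{-\lambda}$ as $s$ approaches $\Gamma'$. The only adaptation needed for the conical setting is the remark that $|S|_h^{-2(1-\beta)}\le\sigma^{-\lambda}$ for $\lambda$ large, together with the uniform bound on $\chi(\epsilon^2+|S|_h^2)$ from \eqref{fact}. In short, the $e^{-t}$ decay is \emph{cohomological}, not dynamical: it comes directly from $[\omega_{\psi_\epsilon}|_{X_s}]=e^{-t}[\omega_0|_{X_s}]$, with no evolution inequality required.

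\textbf{Where your approach runs into trouble.} Your plan is to apply $\partial_t-\Delta_{\omega_{\psi_\epsilon}}$ to a barrier built from $\psi_\epsilon-\overline{\psi_\epsilon}$ and then run a maximum principle. But computing $\Delta_{\omega_{\psi_\epsilon}}\bigl(e^t(\psi_\epsilon-\overline{\psi_\epsilon})\bigr)$ produces a $tr_{\omega_{\psi_\epsilon}}\omega_0$ term (this is exactly \cite[(5.22)]{ST06}, reproduced in the proof of the \emph{next} lemma in the paper), and at this stage of the argument you have \emph{no} bound on $tr_{\omega_{\psi_\epsilon}}\omega_0$: that is precisely the content of Lemma~\ref{keylemma}, whose proof \emph{uses} Lemma~\ref{L1} as input. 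So your proposed ordering is circular. Moreover, the term $\Delta_{\omega_{\psi_\epsilon}}\overline{\psi_\epsilon}$ requires control on the base second derivatives of the fiber average, which Lemma~\ref{Ed'} does not supply. Your ``main obstacle'' paragraph correctly senses that extracting the $e^{-t}$ is the crux, but the resolution is not to ``differentiate \eqref{AMA} along the fiber direction'' inside a parabolic scheme; it is to abandon the parabolic scheme entirely and observe that restriction to fibers plus positivity already gives the decay for free.
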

\begin{proof}
Note that $\frac{1}{|S|_{h}^{2(1-\beta)}}\leq\frac{1}{\sigma^{\lambda}}$ if we choose $\lambda$ large enough. Then combining the estimates in Lemma \ref{L1} and the uniform boundedness of $\chi(\epsilon^{2}+|S|_{h}^{2})$ mentioned in (\ref{fact}), this lemma can be checked by the same arguments as in the proofs of Corollary 5.1 and Corollary 5.2 of \cite{ST06}.
\end{proof}

\begin{lem}
There exist positive constants $C$ and $\lambda$ such that
\begin{equation}\label{EV1}
(\partial_{t}-\Delta_{\omega_{\psi_{\epsilon}}})(e^{t}(\psi_{\epsilon}-\overline{\psi_{\epsilon}}))\geq tr_{\omega_{\psi_{\epsilon}}}\omega_{0}-\frac{C}{\sigma^{\lambda}}-Ce^{t}.
\end{equation}
\end{lem}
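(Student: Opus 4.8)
The plan is to expand $(\partial_{t}-\Delta_{\omega_{\psi_{\epsilon}}})(e^{t}(\psi_{\epsilon}-\overline{\psi_{\epsilon}}))$ by linearity, rewrite the two Laplacian terms by means of $\omega_{\psi_{\epsilon}}=\omega_{t}+\sqrt{-1}\partial\bar{\partial}\psi_{\epsilon}$ and the fibre-average structure, and then bound every remaining term by Lemma \ref{Ed'} and Lemma \ref{L1}; the asserted inequality is understood on $X\setminus\Gamma$, where all the objects below are smooth and $\sigma^{-\lambda}$ is finite. Writing $w=\psi_{\epsilon}-\overline{\psi_{\epsilon}}$ (with $\overline{\psi_{\epsilon}}$ the pull-back), we have $(\partial_{t}-\Delta_{\omega_{\psi_{\epsilon}}})(e^{t}w)=e^{t}w+e^{t}(\partial_{t}-\Delta_{\omega_{\psi_{\epsilon}}})w$. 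Since $\sqrt{-1}\partial\bar{\partial}\psi_{\epsilon}=\omega_{\psi_{\epsilon}}-\omega_{t}$, one has $\Delta_{\omega_{\psi_{\epsilon}}}\psi_{\epsilon}=2-tr_{\omega_{\psi_{\epsilon}}}\omega_{t}$, hence $(\partial_{t}-\Delta_{\omega_{\psi_{\epsilon}}})\psi_{\epsilon}=\partial_{t}\psi_{\epsilon}-2+tr_{\omega_{\psi_{\epsilon}}}\omega_{t}$; and since fibre-averaging commutes with $\partial_{t}$, $\partial_{t}\overline{\psi_{\epsilon}}=\overline{\partial_{t}\psi_{\epsilon}}$. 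Thus the whole problem reduces to a lower bound for $\Delta_{\omega_{\psi_{\epsilon}}}\overline{\psi_{\epsilon}}=tr_{\omega_{\psi_{\epsilon}}}(\sqrt{-1}\partial\bar{\partial}\overline{\psi_{\epsilon}})$, and I expect this to be the main obstacle.

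To handle it I would use that $\overline{\psi_{\epsilon}}=\frac{1}{V}f_{*}(\psi_{\epsilon}\omega_{0})$, where $V=\int_{X_{s}}\omega_{0}|_{X_{s}}$ is independent of $s$ over $\Sigma_{reg}$ and $f_{*}$ is integration over the fibre. Since $f_{*}$ commutes with $\sqrt{-1}\partial\bar{\partial}$ over $\Sigma_{reg}$ and $\omega_{0}$ is closed,
\[
\sqrt{-1}\partial\bar{\partial}\overline{\psi_{\epsilon}}=\tfrac{1}{V}f_{*}(\sqrt{-1}\partial\bar{\partial}\psi_{\epsilon}\wedge\omega_{0})=\tfrac{1}{V}f_{*}((\omega_{\psi_{\epsilon}}-\omega_{t})\wedge\omega_{0}).
\]
Since $\omega_{\psi_{\epsilon}}$ and $\omega_{0}$ are K\"ahler we may discard $f_{*}(\omega_{\psi_{\epsilon}}\wedge\omega_{0})\geq 0$; expanding $\omega_{t}=e^{-t}\omega_{0}+(1-e^{-t})f^{*}\theta'$, using the projection formula $f_{*}(f^{*}\theta'\wedge\omega_{0})=V\theta'$, and retaining the factor $e^{-t}$ in front of the $\omega_{0}^{2}$ term, this gives $\sqrt{-1}\partial\bar{\partial}\overline{\psi_{\epsilon}}\geq-\tfrac{e^{-t}}{V}f_{*}(\omega_{0}^{2})-\theta'$. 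The density of $f_{*}(\omega_{0}^{2})$ relative to $\theta'$ degenerates along the singular fibres, but the local structure of $f$ near them yields $\omega_{0}^{2}\leq C\sigma^{-\lambda}f^{*}\theta'\wedge\omega_{0}$ on $X$ for $\lambda$ large, once $\sigma$ is chosen compatibly with the base (e.g.\ $\sigma=f^{*}\sigma'$ for a suitable $\sigma'$ on $\Sigma$, which is consistent with (\ref{fact1})); the projection formula and $f_{*}\omega_{0}=V$ then give $f^{*}f_{*}(\omega_{0}^{2})\leq C\sigma^{-\lambda}f^{*}\theta'$ on $X$. Pulling back, applying $tr_{\omega_{\psi_{\epsilon}}}$, and using $tr_{\omega_{\psi_{\epsilon}}}f^{*}\theta'\leq C$ from Lemma \ref{Ed'}, I obtain $\Delta_{\omega_{\psi_{\epsilon}}}\overline{\psi_{\epsilon}}\geq-Ce^{-t}\sigma^{-\lambda}-C$, hence $e^{t}\Delta_{\omega_{\psi_{\epsilon}}}\overline{\psi_{\epsilon}}\geq-C\sigma^{-\lambda}-Ce^{t}$.

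Assembling the pieces,
\[
(\partial_{t}-\Delta_{\omega_{\psi_{\epsilon}}})(e^{t}w)=e^{t}w+e^{t}\partial_{t}\psi_{\epsilon}-2e^{t}+e^{t}tr_{\omega_{\psi_{\epsilon}}}\omega_{t}-e^{t}\overline{\partial_{t}\psi_{\epsilon}}+e^{t}\Delta_{\omega_{\psi_{\epsilon}}}\overline{\psi_{\epsilon}},
\]
and I would estimate term by term: $e^{t}tr_{\omega_{\psi_{\epsilon}}}\omega_{t}=tr_{\omega_{\psi_{\epsilon}}}\omega_{0}+(e^{t}-1)tr_{\omega_{\psi_{\epsilon}}}f^{*}\theta'\geq tr_{\omega_{\psi_{\epsilon}}}\omega_{0}$; $|e^{t}w|\leq C\sigma^{-\lambda}$ by Lemma \ref{L1}; $|e^{t}\partial_{t}\psi_{\epsilon}|+2e^{t}+|e^{t}\overline{\partial_{t}\psi_{\epsilon}}|\leq Ce^{t}$ by Lemma \ref{Ed'}; and $e^{t}\Delta_{\omega_{\psi_{\epsilon}}}\overline{\psi_{\epsilon}}\geq-C\sigma^{-\lambda}-Ce^{t}$ as shown above. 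Summing these yields exactly (\ref{EV1}). The genuinely delicate point, as anticipated, is the lower bound for $\sqrt{-1}\partial\bar{\partial}\overline{\psi_{\epsilon}}$: one must notice that the term producing the singular factor $\sigma^{-\lambda}$ also carries $e^{-t}$, so that after multiplication by $e^{t}$ it contributes only $C\sigma^{-\lambda}$ and not $Ce^{t}\sigma^{-\lambda}$; the accompanying technical input is the estimate $f^{*}f_{*}(\omega_{0}^{2})\leq C\sigma^{-\lambda}f^{*}\theta'$, which follows from the order of vanishing of $f$ along the (possibly multiple) singular fibres together with a compatible choice of $\sigma$.
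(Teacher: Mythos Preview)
Your argument is correct and follows essentially the same route as the paper. The paper quotes inequality (5.22) from \cite{ST06} for the upper bound on $\Delta_{\omega_{\psi_{\epsilon}}}(e^{t}(\psi_{\epsilon}-\overline{\psi_{\epsilon}}))$, which is exactly the fibre-integration computation you carry out explicitly, and then bounds $\partial_{t}(e^{t}(\psi_{\epsilon}-\overline{\psi_{\epsilon}}))$ from below using Lemmas \ref{Ed'} and \ref{L1} just as you do. The one place where the paper is more concrete than your sketch is the bound $\tfrac{1}{V}tr_{\omega_{\psi_{\epsilon}}}f^{*}f_{*}(\omega_{0}^{2})\leq C\sigma^{-\lambda}$: rather than appealing to the local structure of $f$, the paper writes $\omega_{0}^{2}\leq C\Omega=2CF\,\omega_{SF}\wedge f^{*}\theta'$, uses that $F=\Omega/(2\omega_{SF}\wedge f^{*}\theta')$ is a function on the base with $F\leq C\sigma^{-\lambda}$, and that $\int_{X_{s}}\omega_{SF}|_{X_{s}}=V$, together with $tr_{\omega_{\psi_{\epsilon}}}f^{*}\theta'\leq C$ from Lemma \ref{Ed'}; this makes your claimed inequality $f^{*}f_{*}(\omega_{0}^{2})\leq C\sigma^{-\lambda}f^{*}\theta'$ precise and avoids a direct local analysis of the singular fibres.
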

\begin{proof}
Firstly recall the inequality (5.22) in \cite{ST06}:
\begin{equation}
\Delta_{\omega_{\psi_{\epsilon}}}(e^{t}(\psi_{\epsilon}-\overline{\psi_{\epsilon}}))\leq-tr_{\omega_{\psi_{\epsilon}}}\omega_{0}+\frac{1}{\int_{X_{s}}\omega_{0}\mid_{X_{s}}}tr_{\omega_{\psi_{\epsilon}}}\left(\int_{X_{s}}\omega_{0}^{2}\right)+2e^{t}\nonumber.
\end{equation}
Moreover,
\begin{align}
\frac{1}{\int_{X_{s}}\omega_{0}\mid_{X_{s}}}tr_{\omega_{\psi_{\epsilon}}}\left(\int_{X_{s}}\omega_{0}^{2}\right)&\leq\frac{C}{\int_{X_{s}}\omega_{0}\mid_{X_{s}}}tr_{\omega_{\psi_{\epsilon}}}\left(\int_{X_{s}}\Omega\right)\nonumber\\
&=\frac{C}{\int_{X_{s}}\omega_{0}\mid_{X_{s}}}\left(\frac{\Omega}{\omega_{SF}\wedge f^{*}\theta'}\right)tr_{\omega_{\psi_{\epsilon}}}\left(\int_{X_{s}}\omega_{SF}\wedge f^{*}\theta'\right)\nonumber\\
&=C\left(\frac{\Omega}{\omega_{SF}\wedge f^{*}\theta'}\right)tr_{\omega_{\psi_{\epsilon}}}f^{*}\theta'\nonumber\\
&\leq\frac{C}{\sigma^{\lambda}}\nonumber
\end{align}
for some uniform positive constants $C$ and $\lambda$. Thus
\begin{equation}\label{EV1.1}
\Delta_{\omega_{\psi_{\epsilon}}}(e^{t}(\psi_{\epsilon}-\overline{\psi_{\epsilon}}))\leq-tr_{\omega_{\psi_{\epsilon}}}\omega_{0}+\frac{C}{\sigma^{\lambda}}+2e^{t}.
\end{equation}
On the other hand,
\begin{align}\label{EV1.2}
\partial_{t}(e^{t}(\psi_{\epsilon}-\overline{\psi_{\epsilon}}))&=e^{t}(\psi_{\epsilon}-\overline{\psi_{\epsilon}})+e^{t}(\partial_{t}\psi_{\epsilon}-\partial_{t}\overline{\psi_{\epsilon}})\nonumber\\
&\geq-\frac{C}{\sigma^{\lambda}}-Ce^{t},
\end{align}
where we have used Lemma \ref{Ed'} and Lemma \ref{L1}. Combining (\ref{EV1.1}) and (\ref{EV1.2}) we conclude (\ref{EV1}).
\end{proof}

Now we are ready to prove the following key lemma in this section.

\begin{lem}\label{keylemma}
There exist positive constants $C$ and $\lambda$ such that
\begin{equation}\label{C2}
tr_{\omega_{\psi_{\epsilon}}}\omega_{t}\leq Ce^{\frac{C}{\sigma^{\lambda}}}.
\end{equation}
\end{lem}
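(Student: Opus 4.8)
The plan is to run a parabolic Chern--Lu (Aubin--Yau) second order estimate for $\log tr_{\omega_{\psi_\epsilon}}\omega_{0}$, combine it with the barrier inequality (\ref{EV1}) and a cut-off by a negative power of $\sigma$, and apply the maximum principle on the smooth approximations; throughout, all constants are uniform in $\epsilon$. Since
\[
tr_{\omega_{\psi_\epsilon}}\omega_{t}=e^{-t}\,tr_{\omega_{\psi_\epsilon}}\omega_{0}+(1-e^{-t})\,tr_{\omega_{\psi_\epsilon}}f^{*}\theta'
\]
and the last term is bounded by Lemma \ref{Ed'}, it suffices to prove that $w:=e^{-t}\,tr_{\omega_{\psi_\epsilon}}\omega_{0}$ satisfies $\log w\le C+C\sigma^{-\lambda}$.

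As $\omega_{0}$ is a fixed smooth K\"ahler metric, its bisectional curvature is bounded, and the parabolic Chern--Lu inequality for the (twisted) flow (\ref{AKRF}) --- in which the twisting term enters with a controlled sign, by the properties of the approximation of \cite{CGP}; see also \cite{LZ,Sh,Ed} --- gives, on $X\times[0,\infty)$,
\[
(\partial_{t}-\Delta_{\omega_{\psi_\epsilon}})\log w\le C_{1}e^{t}w+C_{1},
\]
with $C_{1}$ independent of $\epsilon$. Recalling (\ref{EV1}), namely $(\partial_{t}-\Delta_{\omega_{\psi_\epsilon}})(e^{t}(\psi_{\epsilon}-\overline{\psi_{\epsilon}}))\ge e^{t}w-C\sigma^{-\lambda}-Ce^{t}$, set $Q:=\log w-(C_{1}+1)e^{t}(\psi_{\epsilon}-\overline{\psi_{\epsilon}})$, so that
\[
(\partial_{t}-\Delta_{\omega_{\psi_\epsilon}})Q\le -e^{t}w+Ce^{t}+C\sigma^{-\lambda}+C.
\]
Moreover (\ref{fact1}) together with Lemma \ref{Ed'} yields $tr_{\omega_{\psi_\epsilon}}(\sqrt{-1}\partial\sigma\wedge\bar\partial\sigma)\le C$ and $|\Delta_{\omega_{\psi_\epsilon}}\sigma|\le C$, hence $\Delta_{\omega_{\psi_\epsilon}}(\sigma^{-N})\le C_{N}\sigma^{-N-2}$ for every $N\ge1$. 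Fixing any $N>\lambda$ we obtain
\[
(\partial_{t}-\Delta_{\omega_{\psi_\epsilon}})\big(Q-C_{2}\sigma^{-N}\big)\le -e^{t}w+Ce^{t}+C_{3}\sigma^{-N-2},
\]
with $C_{3}$ depending on $C_{2}$ and $N$.

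Now fix $T>0$. Since $\omega_{\psi_\epsilon}$ is a smooth K\"ahler metric, $w$ is bounded on $X\times[0,T]$, so by Lemma \ref{L1} the function $Q-C_{2}\sigma^{-N}$ tends to $-\infty$ near $\Gamma$, uniformly for $t\in[0,T]$; hence it attains its maximum over $X\times[0,T]$ at a point $(x_{*},t_{*})$ with $x_{*}\notin\Gamma$. If $t_{*}>0$, then $(\partial_{t}-\Delta_{\omega_{\psi_\epsilon}})(Q-C_{2}\sigma^{-N})\ge0$ there, so $w(x_{*},t_{*})\le C+C_{3}\sigma(x_{*})^{-N-2}$, whence $\log w(x_{*},t_{*})\le C+(N+2)\log(1/\sigma(x_{*}))$; inserting this together with Lemma \ref{L1} into the definition of $Q-C_{2}\sigma^{-N}$, and using that $\log(1/\sigma)\le\sigma^{-N}$ and $\sigma^{-\lambda}\le\sigma^{-N}$ on $(0,1]$, one may choose $C_{2}$ (depending only on $N$, $\lambda$ and $C_{1}$, not on $T$ or $\epsilon$) so that $(Q-C_{2}\sigma^{-N})(x_{*},t_{*})\le C$. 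If instead $t_{*}=0$, then $w(x,0)=tr_{\omega_{0,\epsilon}}\omega_{0}\le C$ because $\omega_{0,\epsilon}\ge C^{-1}\omega_{0}$, while $|\psi_{\epsilon}(0)|+|\overline{\psi_{\epsilon}}(0)|\le C\delta$ by (\ref{fact}), so again $(Q-C_{2}\sigma^{-N})(x,0)\le C$. In both cases $\sup_{X\times[0,T]}(Q-C_{2}\sigma^{-N})\le C$ with $C$ independent of $T$ and $\epsilon$. Letting $T\to\infty$ and using Lemma \ref{L1} once more gives $\log w\le C+C\sigma^{-N}$, that is $w\le Ce^{C/\sigma^{N}}$; adding back the bounded term $(1-e^{-t})\,tr_{\omega_{\psi_\epsilon}}f^{*}\theta'$ (Lemma \ref{Ed'}) and renaming $N$ as $\lambda$ proves (\ref{C2}).

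The main difficulty is the combination of the $Ce^{t}$ term in (\ref{EV1}) (which is only as good as the bound $|\partial_{t}\psi_{\epsilon}|\le C$) with the $\sigma^{-\lambda}$ singularity: $Q$ itself cannot be bounded, and it is the cut-off by $-C_{2}\sigma^{-N}$ with $N$ chosen \emph{after} $C_{1}$, together with the observation that at the auxiliary maximum $\log w$ grows only like $\log(1/\sigma)$ rather than like $\sigma^{-1}$, that closes the estimate uniformly in $t$ and $\epsilon$. Tracking these powers of $\sigma$, and securing the $\epsilon$-uniformity of the Chern--Lu step, are where the work lies, in the spirit of \cite{ST06}.
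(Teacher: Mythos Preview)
Your argument is correct and follows the same overall strategy as the paper: reduce to bounding $w=e^{-t}tr_{\omega_{\psi_\epsilon}}\omega_0$, use the parabolic Chern--Lu inequality for $\log w$ (with the twist term controlled via $\Theta\ge -Cf^*\theta'\ge -C\omega_0$, which is the content of the \cite{Ed} estimate you cite), combine it with the barrier inequality (\ref{EV1}) for $e^t(\psi_\epsilon-\overline{\psi_\epsilon})$, and run the maximum principle on the smooth approximation.

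The one genuine difference is how the maximum principle is localized away from $\Gamma$. The paper sets $H=\sigma^{\lambda_1}\big(\log w-Ae^{t}(\psi_\epsilon-\overline{\psi_\epsilon})\big)$, so that $H$ vanishes on $\Gamma$ and the interior maximum is automatic; this multiplicative cutoff produces a cross term $-2\,\mathrm{Re}\big(\sigma^{-\lambda_1}\nabla H\cdot\overline{\nabla}\sigma^{\lambda_1}\big)$ and an extra $C\sigma^{\lambda_1-2}\log w$, the latter absorbed via the elementary inequality $\log u\le \sigma^{2}u+\sigma^{-2}$. You instead subtract an additive barrier $C_2\sigma^{-N}$, forcing the test function to $-\infty$ near $\Gamma$; this avoids the cross term and the $\log u$ trick entirely, but requires the small bookkeeping step you carry out (choosing $C_2$ from $N,\lambda,C_1$ first, then accepting that the resulting $C_3$ and the final bound depend on that choice, with $\log(1/\sigma)\le\sigma^{-N}$ closing the loop). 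Both devices are standard and yield the same exponential bound $tr_{\omega_{\psi_\epsilon}}\omega_t\le Ce^{C/\sigma^{\lambda}}$.
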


\begin{proof}
Recall that $\omega_{t}=e^{-t}\omega_{0}+(1-e^{-t})f^{*}\theta$. By Lemma \ref{Ed'}, it suffices to show
\begin{equation}\label{C2.1}
tr_{\omega_{\psi_{\epsilon}}}(e^{-t}\omega_{0})\leq Ce^{\frac{C}{\sigma^{\lambda}}}.
\end{equation}
A direct computation gives
\begin{align}\label{EV3}
&(\partial_{t}-\Delta_{\omega_{\psi_{\epsilon}}})tr_{\omega_{\psi_{\epsilon}}}(e^{-t}\omega_{0})\nonumber\\
&\leq C_{1}e^{-t}(tr_{\omega_{\psi_{\epsilon}}}\omega_{0})^{2}-e^{-t}g_{\psi_{\epsilon}}^{\bar{b}i}g_{\psi_{\epsilon}}^{\bar{j}a}(g_{0})_{i\bar{j}}\Theta_{a\bar{b}}-g_{\psi_{\epsilon}}^{\bar{j}i}g_{\psi_{\epsilon}}^{\bar{q}p}(g_{0})^{\bar{d}c}\nabla_{i}(g_{0})_{p\bar{d}}\overline{\nabla}_{j}(g_{0})_{c\bar{q}},
\end{align}
where $C_{1}$ is a positive constant only depending on the bisectional curvature of $\omega_{0}$, and $\Theta=(1-\beta)\sqrt{-1}\partial\bar{\partial}\log(|S|_{h}^{2}+\epsilon^{2})+(1-\beta)R_{h}$. Note that it was shown in \cite{Ed} that there exists a uniform positive constant $C$ such that
\begin{equation}
\Theta\geq-Cf^{*}\theta'\nonumber,
\end{equation}
which implies that, increasing $C$ if necessary,
\begin{equation}
\Theta\geq-C\omega_{0}\nonumber.
\end{equation}
Substituting the above inequality into (\ref{EV3}), we obtain
\begin{align}\label{EV3.1}
&(\partial_{t}-\Delta_{\omega_{\psi_{\epsilon}}})tr_{\omega_{\psi_{\epsilon}}}(e^{-t}\omega_{0})\nonumber\\
&\leq C_{1}e^{-t}(tr_{\omega_{\psi_{\epsilon}}}\omega_{0})^{2}+Ce^{-t}g_{\psi_{\epsilon}}^{\bar{b}i}g_{\psi_{\epsilon}}^{\bar{j}a}(g_{0})_{i\bar{j}}(g_{0})_{a\bar{b}}-g_{\psi_{\epsilon}}^{\bar{j}i}g_{\psi_{\epsilon}}^{\bar{q}p}(g_{0})^{\bar{d}c}\nabla_{i}(g_{0})_{p\bar{d}}\overline{\nabla}_{j}(g_{0})_{c\bar{q}}\nonumber\\
&\leq Ce^{-t}(tr_{\omega_{\psi_{\epsilon}}}\omega_{0})^{2}-g_{\psi_{\epsilon}}^{\bar{j}i}g_{\psi_{\epsilon}}^{\bar{q}p}(g_{0})^{\bar{d}c}\nabla_{i}(g_{0})_{p\bar{d}}\overline{\nabla}_{j}(g_{0})_{c\bar{q}}\nonumber.
\end{align}
Hence we have
\begin{equation}\label{EV3.2}
(\partial_{t}-\Delta_{\omega_{\psi_{\epsilon}}})\log tr_{\omega_{\psi_{\epsilon}}}(e^{-t}\omega_{0})\leq C tr_{\omega_{\psi_{\epsilon}}}\omega_{0}
\end{equation}
for some uniform positive constant $C$.
\par Set $H=\sigma^{\lambda_{1}}(\log tr_{\omega_{\psi_{\epsilon}}}(e^{-t}\omega_{0})-Ae^{t}(\psi_{\epsilon}-\overline{\psi_{\epsilon}}))$ for some large constants $\lambda_{1}$ and $A$. Using (\ref{fact1}) and the evolution inequalities (\ref{EV1}) and (\ref{EV3.2}) we have
\begin{equation}\label{EV3.3}
(\partial_{t}-\Delta_{\omega_{\psi_{\epsilon}}})H\leq -\frac{A}{2}\sigma^{\lambda_{1}}tr_{\omega_{\psi_{\epsilon}}}\omega_{0}+C\sigma^{\lambda_{1}-2}\log tr_{\omega_{\psi_{\epsilon}}}(e^{-t}\omega_{0})-2Re\left(\frac{\nabla H\overline{\nabla}\sigma^{\lambda_{1}}}{\sigma^{\lambda_{1}}}\right)+2Ae^{t}.
\end{equation}
Note that when $tr_{\omega_{\psi_{\epsilon}}}\omega_{0}>1$ and $\sigma>0$,
\begin{equation}
\log tr_{\omega_{\psi_{\epsilon}}}\omega_{0}\leq2\sqrt{tr_{\omega_{\psi_{\epsilon}}}\omega_{0}}\leq\sigma^{2}tr_{\omega_{\psi_{\epsilon}}}\omega_{0}+\frac{1}{\sigma^{2}}\nonumber.
\end{equation}
Substituting the above inequality into (\ref{EV3.3}), we have
\begin{equation}\label{EV3.4}
(\partial_{t}-\Delta_{\omega_{\psi_{\epsilon}}})H\leq -\frac{A}{3}\sigma^{\lambda_{1}}tr_{\omega_{\psi_{\epsilon}}}\omega_{0}-2Re\left(\frac{\nabla H\overline{\nabla}\sigma^{\lambda_{1}}}{\sigma^{\lambda_{1}}}\right)+3Ae^{t}.
\end{equation}
Now by the maximum principle argument we can conclude (\ref{C2.1}). Lemma \ref{keylemma} is proved.
\end{proof}

By Lemma \ref{keylemma} we have
\begin{align}
tr_{\omega_{t}}\omega_{\psi_{\epsilon}}&\leq (tr_{\omega_{\psi_{\epsilon}}}\omega_{t})\frac{\omega_{\psi_{\epsilon}}^{2}}{\omega_{t}^{2}}\nonumber\\
&\leq C (tr_{\omega_{\psi_{\epsilon}}}\omega_{t})\frac{e^{-t}|S|_{h}^{2(1-\beta)}\Omega}{e^{-t}\omega_{0}\wedge f^{*}\theta'}\nonumber\\
&\leq \frac{C}{\sigma^{\lambda}|S|_{h}^{2(1-\beta)}}e^{\frac{C}{\sigma^{\lambda_{1}}}}\nonumber\\
&\leq Ce^{\frac{C}{\sigma^{\lambda_{1}+1}}}\nonumber.
\end{align}
In conclusion, we have obtained that
\begin{equation}\label{C2''}
C^{-1}e^{-\frac{C}{\sigma^{\lambda_{1}}}}\omega_{t}\leq\omega_{\psi_{\epsilon}}\leq Ce^{\frac{C}{\sigma^{\lambda_{1}+1}}}\omega_{t}.
\end{equation}
Now, since all the smooth fibers of $f$ are elliptic curves, we can apply an idea due to \cite{GTZ,HeTo} (see also \cite{FZ,To15}) to obtain the local higher order estimates for $\psi_{\epsilon}$.
\begin{prop}\label{Cinfty}
For any fixed $K\subset\subset X\setminus\Gamma$ and all $k\in\mathbb{N}$, there exists a positive constant $C_{K,k}$, which is independent of $\epsilon$ and $t$, such that
\begin{equation}\label{Cinfty'}
\|\psi_{\epsilon}\|_{C^{k}(K,\omega_{0})}\leq C_{K,k}.
\end{equation}
\end{prop}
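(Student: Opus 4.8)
The plan is to reduce the estimate (\ref{Cinfty'}) to a local interior estimate over small coordinate balls in $\Sigma_{reg}\setminus\Gamma'$ and then to remove the fiberwise collapsing of $\omega_{t}$ by the rescaling trick that exploits the fact that the general fiber of $f$ is an elliptic curve, following \cite{GTZ,HeTo} (see also \cite{FZ,To15}).

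First I would cover $K$ by finitely many sets $f^{-1}(B)$ with $B\subset\subset\Sigma_{reg}\setminus\Gamma'$ a small ball, choose local holomorphic coordinates $(z_{1},z_{2})$ on a piece of $f^{-1}(B)$ with $z_{2}$ the pull-back of a coordinate on $B$ and $z_{1}$ a coordinate on the universal cover $\mathbb{C}$ of the fiber, and record the data available on such a chart: on $K$ the functions $|S|_{h}^{2}=f^{*}|S'|_{h'}^{2}$, $\chi(\epsilon^{2}+|S|_{h}^{2})$, $\Omega$ and $\omega_{t,\epsilon}$ are smooth with uniform (in $\epsilon\in(0,1]$ and $t$) $C^{k}$ bounds, since $|S|_{h}\geq c_{0}>0$ there; moreover $\|\psi_{\epsilon}\|_{C^{0}}+\|\partial_{t}\psi_{\epsilon}\|_{C^{0}}\leq C$ by Lemma \ref{Ed'}, $|\psi_{\epsilon}-\overline{\psi_{\epsilon}}|\leq Ce^{-t}$ on $K$ by Lemma \ref{L1}, and $C^{-1}\omega_{t}\leq\omega_{\psi_{\epsilon}}\leq C\omega_{t}$ on $K$ by (\ref{C2''}). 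In particular the base-base component of $\omega_{\psi_{\epsilon}}$ is uniformly bounded above and below on $K$, while its fiber directions have size $\sim e^{-t}$; the obstruction to applying the Evans-Krylov estimate directly is precisely that $\omega_{t}\to f^{*}\theta'$ degenerates along the fibers, so the Monge-Amp\`{e}re operator in (\ref{AMA}) is not uniformly elliptic with respect to any fixed metric.

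Next I would un-collapse the geometry: pass to the rescaled metric $e^{t}\omega_{\psi_{\epsilon}}$ together with the rescaled base coordinate $\hat{z}_{2}:=e^{t/2}z_{2}$, keeping the fiber coordinate $z_{1}$; using the lattice/translation invariance of the flat fiberwise metrics one may take the corresponding unit polycylinder $P$ in $(z_{1},\hat{z}_{2})$ to cover a fixed-size piece of each nearby fiber. By the two-sided bound above, $e^{t}\omega_{\psi_{\epsilon}}$ is uniformly equivalent to the Euclidean metric on $P$, uniformly in $t$ and $\epsilon$, and under this change of variables (\ref{AMA}) transforms into a uniformly parabolic complex Monge-Amp\`{e}re equation on $P$ whose coefficients and right-hand side --- the transformed volume form $e^{t}\Omega$, the factor $(|S|_{h}^{2}+\epsilon^{2})^{1-\beta}$, the zeroth-order terms $\varphi_{\epsilon}$, $\chi(\epsilon^{2}+|S|_{h}^{2})$ and $\partial_{t}\psi_{\epsilon}$, and the rescaled background form --- are all uniformly bounded together with their derivatives on $P$, thanks to the $C^{0}$ bounds above and the smoothness of all the data in the fiber directions on $K$. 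The potential to work with is $\hat{\psi}_{\epsilon}:=e^{t}(\psi_{\epsilon}-\overline{\psi_{\epsilon}})$, which is uniformly bounded on $P$ by Lemma \ref{L1}; the remaining piece $e^{t}\sqrt{-1}\partial\bar{\partial}\overline{\psi_{\epsilon}}$ touches only the base-base direction and is carried by the (separately controlled) background, since $\overline{\psi_{\epsilon}}$ enjoys uniform local $C^{k}$ bounds on $\Sigma_{reg}\setminus\Gamma'$, obtained by integrating the flow equation over the fibers as in \cite{ST06}.

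Finally, with uniform parabolicity and a uniform $C^{0}$ bound on $\hat{\psi}_{\epsilon}$ in hand, I would apply the parabolic Evans-Krylov theorem to get a uniform $C^{2,\alpha}$ bound on a slightly smaller cylinder, then differentiate the equation and iterate parabolic Schauder estimates to obtain uniform $C^{k,\alpha}$ bounds for every $k$; undoing the rescaling $z_{1}\mapsto z_{1}$, $z_{2}\mapsto e^{-t/2}\hat{z}_{2}$ and combining with the $C^{k}$ bounds for $\overline{\psi_{\epsilon}}$ then yields (\ref{Cinfty'}) (the case of bounded $t$ being routine interior parabolic regularity, uniform in $\epsilon$ since $K\subset\subset X\setminus D$). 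Uniformity in $\epsilon$ is built in throughout, since every $\epsilon$-dependent quantity is $C^{k}(K)$-bounded uniformly for $\epsilon\in(0,1]$ because $|S|_{h}\geq c_{0}>0$ on $K$. I expect the main difficulty to be making the rescaling rigorous: checking that the transformed equation is genuinely uniformly parabolic with $C^{k}$-bounded data --- this is exactly where the hypothesis that the general fiber is elliptic enters, through the flatness and translation invariance of the fiberwise metrics, as in \cite{GTZ,HeTo} --- together with the bookkeeping that converts the rescaled bounds, the control of the fiber average $\overline{\psi_{\epsilon}}$, and the uniform positivity and boundedness of the base-base component of $\omega_{\psi_{\epsilon}}$, back into a genuine $C^{k}$ bound for $\psi_{\epsilon}$ in the original coordinates.
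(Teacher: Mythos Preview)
Your proposal is correct and follows essentially the same route as the paper: both rely on the rescaling trick of \cite{GTZ,HeTo}, as carried out for the K\"ahler--Ricci flow in \cite{FZ} and \cite[Theorem~5.24]{To15}. The paper's argument is far more terse: it isolates the single observation needed to feed the approximated flow into that existing machinery, namely that the extra twisting term in (\ref{AKRF}),
\[
(1-\beta)\sqrt{-1}\partial\bar\partial\log(|S|_h^{2}+\epsilon^{2})+(1-\beta)R_h
= f^{*}\!\bigl((1-\beta)\sqrt{-1}\partial\bar\partial\log(|S'|_{h'}^{2}+\epsilon^{2})+(1-\beta)R_{h'}\bigr),
\]
is pulled back from the base and hence constant along the fibers; this is exactly what allows the translation-invariance argument on the universal cover of the elliptic fibers to go through unchanged. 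You have this observation implicitly (via $|S|_h^{2}=f^{*}|S'|_{h'}^{2}$ and $R_h=f^{*}R_{h'}$) but do not foreground it.

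One caveat on your implementation: the claim that $\overline{\psi_{\epsilon}}$ enjoys uniform local $C^{k}$ bounds ``by integrating the flow equation over the fibers as in \cite{ST06}'' is not how the cited references argue and is circular as stated, since the fiber integral of the right-hand side still involves $\omega_{\psi_{\epsilon}}^{2}$, whose higher regularity is precisely what you are trying to establish. The standard execution (as in \cite{FZ,To15}) avoids splitting off the fiber average at the $C^{k}$ stage: one works directly with the metric (via a Calabi-type third-order estimate relative to the semi-flat reference) or with a \emph{local} potential on the stretched chart, and the conversion back to $C^{k}(K,\omega_{0})$ bounds on $\psi_{\epsilon}$ uses only the already-available $C^{0}$ bound on $\psi_{\epsilon}$ together with the two-sided metric equivalence~(\ref{C2''}). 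With that adjustment your outline matches the references the paper invokes.
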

\begin{proof}
Note that on the approximation equation (\ref{AKRF}), the extra term $(1-\beta)\sqrt{-1}\partial\bar{\partial}(|S|_{h}^{2}+\epsilon^{2})+(1-\beta)R_{h}=f^*((1-\beta)\sqrt{-1}\partial\bar{\partial}(|S'|_{h'}^{2}+\epsilon^{2})+(1-\beta)R_{h'})$, which in particular implies that it does not depend on the variable in fiber direction. Thus we can apply the arguments in Theorem 5.24 of \cite{To15} to conclude this proposition.
\end{proof}

Now we choose a sequence $\epsilon_{j}\to 0^{+}$ such that $\varphi_{\epsilon_{j}}$ converges to the unique solution $\varphi$ of the conical K\"{a}hler-Ricci flow (\ref{CKRF'}) in $L^{1}(X,\omega_{0})$ and in $C_{loc}^{\infty}(X\setminus D)$-topology. From Proposition \ref{Cinfty} we have
\begin{cor}\label{Cinfty.1}
For any fixed $K\subset\subset X\setminus\Gamma$ and all $k\in\mathbb{N}$, there exists a positive constant $C_{K,k}$, such that for all $t\geq0$,
\begin{equation}\label{Cinfty''}
\|\varphi+\delta|S|_{h}^{2\beta}\|_{C^{k}(K,\omega_{0})}\leq C_{K,k}.
\end{equation}
\end{cor}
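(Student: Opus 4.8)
The plan is to obtain Corollary~\ref{Cinfty.1} as the $\epsilon\to0$ limit of Proposition~\ref{Cinfty}, with no new analysis required. First I would observe that $K\subset\subset X\setminus\Gamma\subset X\setminus D$: indeed $D=X_{r}=f^{-1}(r)$ is one of the fibers over $\Gamma'=\{r,s_{1},\dots,s_{k}\}$, so $D\subset f^{-1}(\Gamma')=\Gamma$. This inclusion is exactly what makes the two ingredients compatible: on a fixed neighbourhood of $K$, which avoids $\Gamma$, Proposition~\ref{Cinfty} gives the uniform $C^{k}$-bound on $\psi_{\epsilon}$, while $K$ still sits inside the larger open set $X\setminus D$ on which the limiting potential $\varphi+\delta|S|_{h}^{2\beta}$ is smooth and along which the chosen subsequence converges.

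Next I would assemble the relevant convergences on $K$. By the choice of the sequence $\epsilon_{j}\to0^{+}$, $\varphi_{\epsilon_{j}}\to\varphi$ in $C^{\infty}_{loc}(X\setminus D)$, hence in $C^{\infty}(K)$ for each fixed $t\ge0$; and $\chi(\epsilon_{j}^{2}+|S|_{h}^{2})\to|S|_{h}^{2\beta}$ in $C^{\infty}_{loc}(X\setminus D)$, which is immediate from the explicit formula defining $\chi$ and was already implicit in the convergence of $\omega_{t,\epsilon}$ recalled before~(\ref{fact}). Adding these, $\psi_{\epsilon_{j}}=\varphi_{\epsilon_{j}}+\delta\chi(\epsilon_{j}^{2}+|S|_{h}^{2})\to\varphi+\delta|S|_{h}^{2\beta}$ in $C^{\infty}(K)$, and in particular in $C^{k}(K,\omega_{0})$, for every fixed $t\ge0$ and every $k$.

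Finally I would pass to the limit in the estimate: Proposition~\ref{Cinfty} provides $C_{K,k}$, independent of $\epsilon$ and $t$, with $\|\psi_{\epsilon_{j}}\|_{C^{k}(K,\omega_{0})}\le C_{K,k}$; letting $j\to\infty$ with $t$ held fixed, and using the continuity of the $C^{k}(K,\omega_{0})$-norm under $C^{k}(K)$-convergence, gives $\|\varphi+\delta|S|_{h}^{2\beta}\|_{C^{k}(K,\omega_{0})}\le C_{K,k}$ for all $t\ge0$, which is~(\ref{Cinfty''}). I do not anticipate any genuine obstacle: all the work has been done in Proposition~\ref{Cinfty}, and the only points deserving a line of justification are the (lower semi)continuity of the $C^{k}$-norm along the convergent subsequence and the fact that the uniform-in-$t$ character of the bound survives because the limit is taken at each fixed $t$.
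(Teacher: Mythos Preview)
Your proposal is correct and follows exactly the paper's approach: the corollary is deduced from Proposition~\ref{Cinfty} by passing to the limit along the subsequence $\epsilon_j\to0^+$ (chosen so that $\varphi_{\epsilon_j}\to\varphi$ in $C^\infty_{loc}(X\setminus D)$), using that $D\subset\Gamma$ and that $\chi(\epsilon_j^2+|S|_h^2)\to|S|_h^{2\beta}$ away from $D$. Your write-up simply makes explicit the two lines the paper leaves implicit.
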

Now we are going to show the uniform convergence at the level of potentials.
\begin{prop}\label{C0}
For any fixed $K\subset\subset X\setminus\Gamma$, $\varphi\to f^{*}\varphi_{\infty}$ in $L^{\infty}(K)$ as $t\to\infty$, where $\varphi_\infty$ is the function given by Proposition \ref{prop1}.
\end{prop}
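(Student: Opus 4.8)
The plan is to adapt the collapsing argument of Song--Tian \cite{ST06} and Fong--Zhang \cite{FZ} (see also Theorem 5.24 of \cite{To15}) to the present conical, approximated setting. Write $\psi:=\varphi+\delta|S|_{h}^{2\beta}$ and $u_{\infty}:=\varphi_{\infty}+\delta|S'|_{h'}^{2\beta}$; since $|S|_{h}^{2}=f^{*}|S'|_{h'}^{2}$ we have $f^{*}u_{\infty}=f^{*}\varphi_{\infty}+\delta|S|_{h}^{2\beta}$, so it is equivalent to prove that $\psi(t)\to f^{*}u_{\infty}$ in $L^{\infty}(K)$ for every $K\subset\subset X\setminus\Gamma$. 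I would first record two identities to be used at the end: $\Omega=2F\,\omega_{SF}\wedge f^{*}\theta'$ (the definition of $F$), and, reading off (\ref{MA}), $u_{\infty}=\log\frac{|S'|_{h'}^{2(1-\beta)}\,\omega_{\infty}'}{F\theta'}$ on $\Sigma$. These are what will match the limiting flow to the elliptic equation (\ref{MA}).

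Next I would set up a compactness/uniqueness scheme. By Corollary \ref{Cinfty.1} and Arzel\`a--Ascoli, $\{\psi(t)\}_{t\geq 0}$ is precompact in $C^{\infty}_{loc}(X\setminus\Gamma)$, so it suffices to show that every subsequential limit equals $f^{*}u_{\infty}$. Fix $t_{j}\to\infty$ with $\psi(t_{j})\to\Phi$ in $C^{\infty}_{loc}(X\setminus\Gamma)$. By Lemma \ref{L1} the fiberwise oscillation of $\psi$ tends to $0$ locally uniformly on $X\setminus\Gamma$, so $\Phi$ is constant along the fibers of $f$; hence $\Phi=f^{*}u$ for a function $u$ on $\Sigma$, smooth on $\Sigma\setminus\Gamma'$, with $\theta'+\sqrt{-1}\partial\bar\partial u\geq 0$ and $|u|\leq C$ (Lemma \ref{Ed'}).

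The core step is to show that $u$ solves (\ref{MA}) on $\Sigma\setminus\Gamma'$. Decompose $\omega_{\psi(t_{j})}=e^{-t_{j}}\omega_{0}+(1-e^{-t_{j}})f^{*}\theta'+\sqrt{-1}\partial\bar\partial\psi(t_{j})$ into horizontal, vertical and mixed parts with respect to $f$. Since $\psi(t_{j})\to f^{*}u$ in $C^{\infty}_{loc}$, the horizontal part tends to $f^{*}(\theta'+\sqrt{-1}\partial\bar\partial u)$; the mixed part tends to $0$ fast enough --- by Lemma \ref{L1} interpolated against Corollary \ref{Cinfty.1} --- that $e^{t_{j}}$ times its square still tends to $0$; and the rescaled fiber metrics $e^{t_{j}}\omega_{\psi(t_{j})}|_{X_{s}}$, which lie in the fixed class $[\omega_{0}|_{X_{s}}]$, converge in $C^{\infty}(X_{s})$ --- locally uniformly in $s\in\Sigma_{reg}$, with the help of (\ref{C2''}) --- to the unique Ricci-flat representative, namely the semi-flat metric $\omega_{SF}|_{X_{s}}$; this last point is where the ellipticity of the fibers is used and is exactly the machinery invoked already in the proof of Proposition \ref{Cinfty} (\cite{GTZ,HeTo,To15}). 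Multiplying the vertical and horizontal limits, $e^{t_{j}}\omega_{\psi(t_{j})}^{2}\to 2\,\omega_{SF}\wedge f^{*}(\theta'+\sqrt{-1}\partial\bar\partial u)$ in $C^{0}_{loc}(X\setminus\Gamma)$, so using $\Omega=2F\,\omega_{SF}\wedge f^{*}\theta'$ and $f^{*}(\theta'+\sqrt{-1}\partial\bar\partial u)=(f^{*}\tfrac{\theta'+\sqrt{-1}\partial\bar\partial u}{\theta'})\,f^{*}\theta'$,
\begin{equation}
\log\frac{e^{t_{j}}|S|_{h}^{2(1-\beta)}\,\omega_{\psi(t_{j})}^{2}}{\Omega}\;\longrightarrow\;f^{*}\log\frac{|S'|_{h'}^{2(1-\beta)}(\theta'+\sqrt{-1}\partial\bar\partial u)}{F\theta'}\quad\text{in }C^{0}_{loc}(X\setminus\Gamma).\nonumber
\end{equation}
By (\ref{CKRF'}) the left-hand side is $\partial_{t}\psi(t_{j})+\psi(t_{j})$; since $\partial_{t}\psi$ is uniformly bounded (Lemma \ref{Ed'}) and, for this normalized flow, $\partial_{t}\psi\to 0$ in $C^{0}_{loc}(X\setminus\Gamma)$ --- which one checks as in \cite{ST06} by an ODE comparison for $\max_{\Sigma}$ and $\min_{\Sigma}$ of the fiber-averaged potential --- the left-hand side converges to $f^{*}u$. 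Comparing, $\theta'+\sqrt{-1}\partial\bar\partial u=\frac{Fe^{u}}{|S'|_{h'}^{2(1-\beta)}}\theta'$ on $\Sigma\setminus\Gamma'$, i.e.\ (\ref{MA}) holds for $u$. By the uniqueness in Proposition \ref{prop1}, $u=u_{\infty}$. Thus every subsequential limit of $\psi(t)$ is $f^{*}u_{\infty}$, so $\psi(t)\to f^{*}u_{\infty}$ in $C^{\infty}_{loc}(X\setminus\Gamma)$ and, in particular, $\varphi=\psi-\delta|S|_{h}^{2\beta}\to f^{*}\varphi_{\infty}$ in $L^{\infty}(K)$ for every $K\subset\subset X\setminus\Gamma$.

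The main obstacle is the collapsing estimate inside the core step: proving, uniformly on compact subsets of $X\setminus\Gamma$, that the rescaled fiber (elliptic-curve) metrics $e^{t}\omega_{\psi}|_{X_{s}}$ converge to the semi-flat Ricci-flat metrics $\omega_{SF}|_{X_{s}}$, hence that $e^{t}\omega_{\psi}^{2}\to 2\,\omega_{SF}\wedge f^{*}\omega_{\infty}'$. Once this (together with the subsidiary fact $\partial_{t}\psi\to 0$) is in hand, the identification of the limit through (\ref{MA}) and Proposition \ref{prop1}, and the passage back from $\psi$ to $\varphi$, are routine.
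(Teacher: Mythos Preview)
Your route differs genuinely from the paper's. The paper does not argue by compactness and identification of subsequential limits; it runs a direct barrier/maximum-principle argument following Proposition~6.1 of \cite{ST06}. Concretely, it pulls back from $\Sigma$ a defining section $\tilde S$ of the divisor supported on $\Gamma$, with a Hermitian metric $\tilde h$ whose curvature is a multiple of $f^*\theta'$, cuts off the semi-flat potential to $\rho_K=(f^*\eta_K)\rho_{SF}$, sets
\[
\psi_K^{\pm}=\varphi-f^*\varphi_\infty-e^{-t}\rho_K\mp\varepsilon\log|\tilde S|_{\tilde h}^{2},
\]
computes the evolution of $\psi_K^{-}$ (with $f^*\omega_\infty'$ and $\omega_{SF}$ appearing as reference data), and applies the maximum principle on $X\setminus B_{d_K}(\Gamma)$ to obtain $\psi_K^{-}\le C\varepsilon$ and similarly $\psi_K^{+}\ge -C\varepsilon$ for all large $t$; this gives $|\varphi-f^*\varphi_\infty|\le C_K\varepsilon$ on $K$ directly. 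Your compactness-plus-uniqueness scheme is more conceptual, but it rests on two ingredients the paper neither proves nor needs: that $\partial_t\psi\to 0$ locally uniformly on $X\setminus\Gamma$, and that the rescaled fiber metrics $e^t\omega_\psi|_{X_s}$ converge to $\omega_{SF}|_{X_s}$ so that $e^t\omega_\psi^{2}\to 2\,\omega_{SF}\wedge f^*(\theta'+\sqrt{-1}\partial\bar\partial u)$. Versions of both appear in \cite{FZ,To15} for the smooth flow, but carrying them over to the conical setting is work of the same order as the proposition itself --- you rightly flag them as the main obstacles. The paper's barrier method, by contrast, uses only the $C^0$-level input already assembled (Lemmas~\ref{Ed'}--\ref{keylemma}) together with the elliptic equation \eqref{MA}, and sidesteps the fiber-collapsing analysis entirely. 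One further point for your route: to invoke the uniqueness in Proposition~\ref{prop1} you must check that your limit $u$ extends as a bounded $\theta'$-psh function across the finite set $\Gamma'$, not merely that it is smooth on $\Sigma\setminus\Gamma'$; this is true but should be said.
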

\begin{proof}
The proof will make use of a argument due to \cite{ST06}. Firstly, by pulling back a defining section of $s_{1}+\ldots+s_{k}+r$ on $\Sigma$, we may fix a defining section $\tilde{S}$ of the divisor $f^*([s_{1}]+\ldots+[s_{k}]+[r])$ vanishing exactly on $\Gamma$. Moreover, if we denote the holomorphic line bundle on $\Sigma$ associated to $s_{1}+\ldots+s_{k}+r$ by $L'$, then by using $H^{1,1}(\Sigma,\mathbb{R})=\mathbb{R}$ we know $c_1(L')=\mu[\theta']$ for some constant $\mu$, where $[\theta']$ is the K\"ahler class of $\theta'$. Thus by pulling back a smooth Hermitian metric on $L'$, we may fix a smooth Hermitian metric $\tilde{h}$ on the holomorphic line bundle associated to $f^*([s_{1}]+\ldots+[s_{k}]+[r])$ with $Ric(\tilde{h})=\mu f^{*}\theta'$ for some $\mu\in\mathbb{R}$ and $|\tilde{S}|_{\tilde{h}}^{2}\leq1$.
\par Define
\begin{equation}
B_{d}(\Gamma')=\{s\in\Sigma|dist_{\theta}(s,\Gamma')\leq d\}\nonumber
\end{equation}
and
\begin{equation}
B_{d}(\Gamma)=f^{-1}(B_{d}(\Gamma'))\nonumber.
\end{equation}
Let $\varepsilon$ be an arbitrary positive constant. Choose $d_{K}>0$ such that $\overline{K}\subset X\setminus B_{d_{K}}(\Gamma)$ and for all $z\in B_{d_{K}}(\Gamma)$ and $t\geq0$,
\begin{equation}
(\varphi-f^{*}\varphi_{\infty}+\varepsilon\log|\tilde{S}|_{\tilde{h}}^{2})(z,t)<-1\nonumber
\end{equation}
and
\begin{equation}
(\varphi-f^{*}\varphi_{\infty}-\varepsilon\log|\tilde{S}|_{\tilde{h}}^{2})(z,t)>1\nonumber.
\end{equation}
Let $\eta_{K}$ be a smooth cut-off function on $\Sigma$ such that $\eta_{K}=1$ on $\Sigma\setminus B_{d_{K}}(\Gamma')$ and $\eta_{K}=0$ on $B_{\frac{d_{K}}{2}}(\Gamma')$.
\par Define $\rho_{K}=(f^{*}\eta_{K})\rho_{SF}$ and $\omega_{SF,K}=\omega_{0}+\sqrt{-1}\partial\bar{\partial}\rho_{K}$. Then set
\begin{equation}
\psi_{K}^{-}=\varphi-f^{*}\varphi_{\infty}-e^{-t}\rho_{K}+\varepsilon\log|\tilde{S}|_{\tilde{h}}^{2}\nonumber
\end{equation}
and
\begin{equation}
\psi_{K}^{+}=\varphi-f^{*}\varphi_{\infty}-e^{-t}\rho_{K}-\varepsilon\log|\tilde{S}|_{\tilde{h}}^{2}\nonumber.
\end{equation}
A direct computation gives
\begin{equation}\label{EV4}
\partial_{t}\psi_{K}^{-}=\log\frac{e^{t}(e^{-t}\omega_{SF,K}+f^{*}\omega_{\infty}+(\varepsilon\mu-e^{-t})f^{*}\theta'+\sqrt{-1}\partial\bar{\partial}\psi_{K}^{-})^{2}}{2\omega_{SF}\wedge f^{*}\omega_{\infty}}-\psi_{K}^{-}+\varepsilon\log|\tilde{S}|_{\tilde{h}}^{2}.
\end{equation}
Now we can use the same argument as in the proof of Proposition 6.1 of \cite{ST06} to conclude that there exist uniform constants $C$ and $T_{1}$ such that for all $t\geq T_{1}$,
\begin{equation}
\psi_{K}^{-}\leq C\varepsilon\nonumber
\end{equation}
if $\varepsilon$ is sufficiently small. In particular, on $K\times[T_{1},\infty)$,
\begin{equation}\label{C0.1}
\varphi-f^{*}\varphi_{\infty}\leq C_{K}\varepsilon
\end{equation}
for some constant $C_K$ only depending on $K$. Similarly,
\begin{equation}\label{C0.2}
\varphi-f^{*}\varphi_{\infty}\geq-C_{K}\varepsilon
\end{equation}
on $K\times[T_{1},\infty)$. Proposition \ref{C0} follows from (\ref{C0.1}) and (\ref{C0.2}).
\end{proof}

Finally, we are able to prove Theorem \ref{main}.\\
\begin{proof}[Proof of Theorem \ref{main}]
Combining Proposition \ref{prop1}, Corollary \ref{Cinfty.1} and Proposition \ref{C0}, Theorem \ref{main} is proved.
\end{proof}

\section*{Acknowledgements}
The author is grateful to Professor Huai-Dong Cao for constant encouragement and support, as well as many enlighten discussions and suggestions. His thanks also go to Professors Valentino Tosatti and Zhenlei Zhang for very helpful discussions, in particular to Professor Tosatti for reading carefully a previous version of this paper, pointing out a mistake and useful suggestions. The author also thanks referees for some helpful comments.

\end{document}